\newtheorem{theorem}{Theorem}[section]
\newtheorem{lemma}[theorem]{Lemma}
\newtheorem{remark}[theorem]{Remark}
\newcommand{\footremember}[2]{%
	\footnote{#2}
	\newcounter{#1}
	\setcounter{#1}{\value{footnote}}%
}
\author{%
	Cécile Haberstich\footremember{cea}{CEA, DAM, DIF, F-91297 Arpajon France}%
	\and Anthony Nouy\footremember{centrale}{Centrale Nantes, LMJL (UMR CNRS 6629)} %
	\and Guillaume Perrin\footremember{gustaveeiffel}{COSYS, Université Gustave Eiffel, 77420 Champs-sur-Marne, France} %
}
\title{Active learning of tree tensor networks using optimal least-squares}
\date{}
\begin{document}

\maketitle

\begin{abstract}
In this paper, we propose new learning algorithms for approximating high-dimensional functions using tree tensor networks in a least-squares setting. Given a dimension tree or architecture of the tensor network, we provide an algorithm that generates a sequence of nested tensor subspaces based on a generalization of principal component analysis for multivariate functions. An optimal least-squares method is used for computing projections onto the generated tensor subspaces, using samples generated from a distribution depending on the previously generated subspaces. 
We provide an error bound in expectation for the obtained approximation. 
Practical strategies are proposed for adapting the feature spaces and ranks to achieve a prescribed error.
Also, we propose an algorithm that progressively constructs the dimension tree by suitable pairings of variables, that allows 
to further reduce the number of samples necessary to reach that error. Numerical examples illustrate the performance of the proposed algorithms and show that stable approximations are obtained with a number of samples close to the number of free parameters of the estimated tensor networks. 
\end{abstract}

\begin{keywords}
high-dimensional approximation, tree-based tensor formats, principal component analysis, adaptive strategies, active learning with weighted least-squares
\end{keywords}


	\section{Introduction}
~
The approximation of high-dimensional functions raises many challenges. Especially for uncertainty quantification problems where a function represents a model depending on a potentially high number of parameters. Such problems require many evaluations of the functions which is intractable when the model is costly to evaluate. A solution is then to construct a surrogate model which amounts in approximating the relation between an output random variable $Y$ and input random variables representing the parameters $X = (X_1, \hdots, X_d)$ using samples 
of $(X,Y)$.\\

When the dimension $d$ is high, using approximation tools adapted to standard regularity classes (e.g. splines for isotropic Sobolev or Besov regularity) leads to a complexity of the approximation methods which grows exponentially with the dimension $d$. This is the so-called curse of dimensionality. To expect a good approximation in a context where the number of evaluations of a function has to be moderate, we have to assume that the functions present some low-dimensional structures. Exploiting these structures of the function usually requires particular approximation tools, which may be application dependent. 
~
An approximation tool that achieve good performances for many classes of functions is the class of tree tensor networks or functions in tree-based tensor format. Given a partition tree $T$ over $D := \{1, \hdots, d\}$ and a tuple $r = (r_\alpha)_{\alpha \in D}$ of integers, a tree based tensor format $\mathcal{T}_r^T(V)$ is defined as the set of functions in some finite-dimensional tensor space $V$ (or feature tensor space) whose $\alpha$-ranks are bounded by $r_\alpha$. A function $u \in \mathcal{T}_r^T(V)$ therefore admits for each $\alpha \in T$ a 
finite-rank representation 
\begin{equation}
\label{eq:alpha_rank}
v(x) = \sum^{r_{\alpha}}_{i=1}v_i^{\alpha}(x_{\alpha})w_i^{\alpha^c}(x_{\alpha^c}),
\end{equation}
where the $v_i^{\alpha}$ and $w_i^{\alpha^c}$ are functions of complementary groups of variables.
It admits a multilinear parametrization with parameters forming a tree network of low-order tensors, hence the name tree tensor networks. Also, it has been identified with a particular class of deep neural networks (more precisely sum-product networks or arithmetic circuits) \cite{CohenSharirShashua2016}. For a detailed introduction to tree tensor networks, we refer the reader to the monograph \cite{Hackbusch2015} and surveys \cite{BachmayrSchneiderUschmajew2016,FalcoNouy2018,Nouy2017b1,cichocki2016tensor1}. 
\\

Several algorithms for constructing approximations in tree-based tensor formats using points evaluations of functions have already been proposed. On the one hand, there are learning approaches that use random and unstructured evaluations of the functions \cite{StoudenmireSchwab2016,GrelierNouyChevreuil2018,HaMiLiu2020}. 
These algorithms are yet mainly based on heuristics and lack of theoretical guarantees. On the other hand, there are (fewer) algorithms 
that use adaptive and structured evaluations of functions. Among them, we can distinguish extensions of (adaptive) cross approximation to higher-order tensor (see \cite{LuuMadayGuilloGuerin2017} for the Tucker format, or \cite{OseledetsTyrtyshnikov2010} and \cite{BallaniGrasedyckKluge2013} for tree-based tensor formats) from extensions of the singular value decomposition (SVD) to higher-order tensors (see \cite{Lathauwer2000}, \cite{Grasedyck2010} and \cite{Nouy2017}). Among higher-order singular value decomposition (HOSVD) approaches, the method from \cite{Nouy2017} is of particular interest, the principle is to construct a hierarchy of optimal subspaces that results in a final tensor product space in which the function $u$ is projected. Under strong assumptions on the estimation error made in the determination of subspaces, the author in \cite{Nouy2017} shows that with a number of evaluations scaling as the complexity (i.e. the number of parameters) of the tree-based tensor format, the approximation is quasi-optimal but with constants depending on some projection operators which are not properly quantified. 
Devising learning algorithms coming with theoretical guarantees remains an open challenge. 
\\

In this work we propose an algorithm adapted from \cite{Nouy2017} that constructs an approximation of $u$ in tree-based tensor format, using adaptive and structured sampling, with near-optimality results under some assumptions on the function and the number of samples. Also we propose heuristic strategies for obtaining an approximation with a desired precision and near-optimal complexity. Given a tree $T$, and using a leaves-to-root approach, the algorithm constructs, thanks to a series of principal component analyses (PCA), low-dimensional subspaces of functions of groups of variables associated with each node of the tree. More precisely, for each node of the tree $\alpha \in T \setminus \{D\}$, we construct the $\alpha$-principal subspace of an oblique projection of $u$ (that is to say an approximation of the $\alpha$-principal subspace of $u$).
For the projection, we use the boosted optimal weighted least-squares projection \cite{HaNoPe2019}. Using this strategy the error has several contributions: a discretization error (due to the use of a finite-dimensional feature space $V$), a truncation error (due to the finite ranks $r$) and an estimation error (due to the limited number of samples). We propose a (partially) heuristic adaptive algorithm that controls simultaneously the discretization, truncation and estimation errors.\\

The above algorithm works for an arbitrary but fixed dimension partition tree $T$. However the ranks and therefore the number of evaluations $n$ necessary to reach a given precision may strongly depend on the chosen tree $T$. Choosing the tree which minimizes the number of evaluations $n$ for a given accuracy is a
combinatorial optimization problem, that is intractable in practice. In \cite{GrelierNouyChevreuil2018} and \cite{GrelierNouyLebrun2019}, the authors propose a stochastic algorithm that explores a reasonable number of dimension trees with the same arity. The key idea is to favour the exploration of trees yielding low ranks for a given precision. In \cite{BallaniGrasedyck2014}, the authors propose a deterministic strategy that constructs a dimension tree in a leaves-to-root approach by successive pairing of nodes. The pairings are chosen in order to minimize a certain cost functional based on estimated $\alpha$-ranks. The selected tree can be used to compute the approximation of $u$. The number of function's evaluations used to estimate the $\alpha$-ranks adds up to the number of evaluations necessary to compute the approximation. In this paper, we propose a new approach that progressively constructs a dimension tree by suitable pairings of variables (using stochastic optimization) and estimate the principal subspaces associated with the newly selected nodes.\\
%

The outline of the paper is as follows. In Section \ref{sec:pca_multivariate_functions}, we first present the notion of principle component analysis for multivariate functions with the definition of $\alpha$-principal subspaces. We then propose a strategy to estimate these spaces relying on an approximation with a particular oblique projection and an adaptive statistical estimation. 
In Section \ref{sec:learning_ttnetworks_PCA}, we present and analyze the algorithm for learning a tree tensor network given a fixed dimension tree. In Section \ref{sec:tree_adaptation}, we present the constructive approach for selecting a dimension tree. Finally, Section \ref{sec:numerical_examples} demonstrates the efficiency of the proposed algorithms on numerical examples.

\section{Principal component analysis of multivariate functions}\label{sec:pca_multivariate_functions}

For $d>1$, let $\mathcal{X} = \mathcal{X}_1 \times \hdots \times \mathcal{X}_d$ be a product set in $\mathbb{R}^d$, and $\mu = \mu_1 \otimes \hdots \otimes \mu_d$ be a product measure on $\mathcal{X}$. The Hilbert space of real-valued square-integrable functions defined on $\mathcal{X}$ is denoted by $L^2_{\mu}(\mathcal{X})$. Let $\vert \vert \cdot \vert \vert_{L^2_{\mu}(\mathcal{X})}$ be the natural norm in $L^2_{\mu}(\mathcal{X})$, defined by
\begin{equation}
\Vert v \Vert_{L^2_{\mu}}^2 = \int_{\mathcal{X}} v(x)^2d\mu(x), \ v \in L^2_{\mu}(\mathcal{X}).
\end{equation}
For each $x=(x_1,\ldots,x_d)\in\mathcal{X}$ and each non-empty subset $\alpha$ of $D$, we write $x_{\alpha} = (x_{\nu})_{\nu \in \alpha}$, $\mu_{\alpha} = \otimes_{\nu \in \alpha} \mu_{\nu}$, and $\mathcal{X}_{\alpha} = \bigtimes_{\nu \in \alpha} \mathcal{X}_{\nu}$. Up to a reordering of the variables $x_1, \hdots, x_d$, a function $u$ defined on $\mathcal{X}$ can be identified with a bivariate function $u(x_{\alpha}, x_{\alpha^c})$ defined in $\mathcal{X}_{\alpha} \times \mathcal{X}_{\alpha^c}$, where $\alpha^c = D \setminus \alpha$.\\

~ The $\alpha$-rank of $u$, denoted by $\text{rank}_{\alpha}(u)$, is the canonical rank of $u(x_{\alpha}, x_{\alpha^c})$, that is the minimal integer such that for some functions $f_k^{\alpha} \in L^2_{\mu_{\alpha}}(\mathcal{X}_{\alpha})$, and $ f_k^{\alpha^c} \in L^2_{\mu_{\alpha^c}}(\mathcal{X}_{\alpha^c})$
\begin{equation}
\label{def:matricisation}
u(x) = \sum_{k=1}^{\text{rank}_{\alpha}(u)}f_k^{\alpha}(x_{\alpha})f_k^{\alpha^c}(x_{\alpha^c}).
\end{equation}

For a $m_\alpha$-dimensional subspace $V_{\alpha} \subset L^2_{\mu_{\alpha}}(\mathcal{X}_{\alpha})$, we denote by $P_{V_{\alpha}}$ the orthogonal projection from $L^2_{\mu_{\alpha}}(\mathcal{X}_{\alpha})$ to $V_{\alpha}$, and by $\mathcal{P}_{U_{\alpha}}$ the orthogonal projection from $L^2_{\mu}(\mathcal{X})$ to $V_{\alpha} \otimes L^2_{\mu_{\alpha^c}}(\mathcal{X}_{\alpha^c})$, such that for all $x_{\alpha^c} \in \mathcal{X}_{\alpha^c}$, $(\mathcal{P}_{U_{\alpha}}u)(\cdot, x_{\alpha^c}) = P_{U_{\alpha}} u(\cdot, x_{\alpha^c})$.\\

~From now on, for the sake of clarity and when there is no ambiguity, we will denote $L^2_{\mu}: = L^2_{\mu}(\mathcal{X})$, the norm $\Vert v \Vert : = \Vert v \Vert_{L^2_{\mu}(\mathcal{X})}$ and the associated inner product $(\cdot , \cdot) : = (\cdot , \cdot)_{L^2_{\mu}(\mathcal{X})} $. Also, we let $L^2_{\mu_{\alpha}} := L^2_{\mu_{\alpha}}(\mathcal{X}_{\alpha})$.
\subsection{$\alpha$-principal subspaces}\label{sec:alpha_principal_subspaces}

Let $\alpha \subset D$ and $u$ be a function in $L^2_{\mu}$ with $\text{rank}_{\alpha}(u)\in\mathbb{N}\cup \{+\infty\}$. 
For each $\alpha \subset D$, the function $u$ admits the following singular value decomposition
\begin{equation}
\label{eq:svd_multi_function}
u(x) = \sum_{k=1}^{\text{rank}_{\alpha}(u)}\sigma_k^{\alpha} v_k^{\alpha}(x_{\alpha}) v_k^{\alpha^c}(x_{\alpha^c}).
\end{equation}
Here, $\sigma_1^{\alpha} \ge \hdots \ge \sigma_{\text{rank}_{\alpha}(u)}^{\alpha}$ are the $\alpha$-singular values, which are assumed to be sorted in decreasing order, and $v_k^{\alpha}\in L^2_{\mu_{\alpha}}$ and $v_k^{\alpha^c}\in L^2_{\mu_{\alpha^c}}$ are respectively the left and right normalized singular functions, such that $\Vert v_k^{\alpha} \Vert_{L^2_{\mu_{\alpha}}} = \Vert v_k^{\alpha^c} \Vert_{L^2_{\mu_{\alpha^c}}} = 1$.	For $r_\alpha\leq \text{rank}_{\alpha}(u)$, the truncated singular value decomposition of $u$ up to the rank $r_{\alpha}$ is then given by
\begin{equation}
u_{r_{\alpha}}(x) = \sum_{k=1}^{r_{\alpha}}\sigma_k^{\alpha} v_k^{\alpha}(x_{\alpha})  v_k^{\alpha^c}(x_{\alpha^c}).
\end{equation}

The $r_{\alpha}$ dominant left singular functions $\{v_{k}^{\alpha}\}_{k=1}^{r_\alpha}$ are called the $\alpha$-principal components of $u$, while the linear span of these $r_\alpha$ functions, denoted by $U_{\alpha}$, is called the $\alpha$-principal subspace of $u$. The function $u_{r_{\alpha}}=\mathcal{P}_{U_{\alpha}}u$ is the best approximation of $u$ with $\alpha$-rank $r_{\alpha}$, i.e. it satisfies
\begin{equation}
\label{eq:sol_principal_subs}
\Vert u - \mathcal{P}_{U_{\alpha}}u \Vert = \min_{\substack{v\in L_{\mu}^2 \\  \text{rank}_{\alpha}(v) \le r_{\alpha}}}  \Vert u - v\Vert = \min_{\substack{W_{\alpha} \subset L^2_{\mu_{\alpha}} \\ \dim (W_{\alpha}) = r_{\alpha} }}  \Vert u - \mathcal{P}_{W_{\alpha}}u \Vert := e_{r_{\alpha}}^{\alpha}(u).
\end{equation}

\paragraph{Approximation of the $\alpha$-principal subspaces.}
~
In practice, we do not directly determine the $\alpha$-principal subspaces of $u$, but an approximation of $U_{\alpha}$ is searched in a certain finite-dimensional subspace of $L_{\mu_{\alpha}}^2$, denoted $V_{\alpha}$. Noting $m_{\alpha} := \text{dim}(V_{\alpha}) \ge r_{\alpha}$, this approximation can be obtained by solving
\begin{equation}
\label{eq:sol_principal_subs_approx_space}
\min_{\substack{\dim (W_{\alpha}) = r_{\alpha} \\ W_{\alpha} \subset V_{\alpha}}}  \Vert u - \mathcal{P}_{W_{\alpha}}u \Vert^2 = \min_{\substack{\dim (W_{\alpha}) = r_{\alpha} \\ W_{\alpha} \subset V_{\alpha}}}  \Vert u - \mathcal{P}_{V_{\alpha}}u \Vert^2 + \Vert \mathcal{P}_{V_{\alpha}}u - \mathcal{P}_{W_{\alpha}}u \Vert^2.
\end{equation}
If $W_{\alpha} \subset V_{\alpha}$, $\mathcal{P}_{W_{\alpha}}  = \mathcal{P}_{W_{\alpha}}\mathcal{P}_{V_{\alpha}}$, and solving \Cref{eq:sol_principal_subs_approx_space} is equivalent to solving
\begin{equation*}
\min_{\substack{\dim (W_{\alpha}) = r_{\alpha} \\ W_{\alpha} \subset L^2_{\mu_{\alpha}}}}  \Vert \mathcal{P}_{V_{\alpha}}u - \mathcal{P}_{W_{\alpha}}\mathcal{P}_{V_{\alpha}}u \Vert_{L_{\mu}^2}^2,
\end{equation*}
whose solution is the $\alpha$-principal subspace of $\mathcal{P}_{V_{\alpha}}u$.\\
Since the orthogonal projection is usually not computable, $\mathcal{P}_{V_{\alpha}}$ is replaced by an oblique projection $\mathcal{Q}_{V_{\alpha}}$ from $L^2_{\mu_{\alpha}}$ onto $V_{\alpha} \otimes L^2_{\mu_{\alpha^c}}$. An approximate $\alpha$-principal subspace is then obtained by solving
\begin{equation}
\label{eq:sol_approx_principal_subs}
\min_{\substack{\dim (W_{\alpha}) = r_{\alpha} \\ W_{\alpha} \subset L^2_{\mu_{\alpha}}}}  \Vert \mathcal{Q}_{V_{\alpha}} u - \mathcal{P}_{W_{\alpha}}\mathcal{Q}_{V_{\alpha}}u \Vert_{L_{\mu}^2}^2,
\end{equation} 
whose solution $U_{\alpha}^{\star}$ is the $\alpha$-principal subspace of $\mathcal{Q}_{V_{\alpha}}u$.  
For each $V_{\alpha}$, $\mathcal{Q}_{V_{\alpha}}$ may be a sample-based projection. In the case where the samples used to define $\mathcal{Q}_{V_{\alpha}}$ are random, it is important to notice that the quantity $ \Vert \mathcal{Q}_{V_{\alpha}} u - \mathcal{P}_{U_{\alpha}^{\star}}\mathcal{Q}_{V_{\alpha}}u \Vert_{L_{\mu}^2}^2$ is thus a random variable.
\subsection{Choice of the oblique projection}\label{subs:choice_oblique_proj}
~
Here, we consider for $\mathcal{Q}_{V_{\alpha}}$ the boosted least-squares projection presented in \cite{HaNoPe2019}, whose main characteristics are now recalled.\\

Let $\{\varphi_j^{\alpha}\}_{j=1}^{m_{\alpha}}$ be an orthonormal basis of a $m_{\alpha}$-dimensional space $V_{\alpha} \subset L^2_{\mu_{\alpha}}$, and $\rho_\alpha$ be the measure defined by
\begin{equation}
\label{eq:optimal_measure}
d\rho_{\alpha}(x_{\alpha}) = w^{\alpha}(x_{\alpha})^{-1}d\mu_{\alpha}(x_{\alpha}), \ \  w^{\alpha}(x_{\alpha})^{-1} = \frac{1}{m_{\alpha}}\sum_{j=1}^{m_{\alpha}} \varphi_j^{\alpha}(x_{\alpha})^2.
\end{equation}
The function $w^{\alpha}(x_{\alpha})^{-1}$ is the density of $\rho_{\alpha}$ with respect to the reference measure $\mu_{\alpha}$. As it is invariant by rotation of $\{\varphi_j^{\alpha}\}_{j=1}^{m_{\alpha}}$, $\rho_{\alpha}$ does not depend on the chosen orthonormal basis but only on $V_\alpha$. For all $f^{\alpha} \in L_{\mu_{\alpha}}^2$, the boosted optimal weighted least-squares projection of $f^{\alpha}$ on $V_{\alpha}$, denoted $Q_{V_{\alpha}}$, is defined by
\begin{equation*}
Q_{V_{\alpha}}f^{\alpha} = \arg\min_{g^{\alpha} \in V_{\alpha}} \Vert f^{\alpha}  - g^{\alpha}\Vert_{\bm{x}_{\alpha}^{z_{\alpha}}},
\end{equation*}
with $\bm{x}_{\alpha}^{z_{\alpha}} := \{ x_{\alpha}^i \}_{i=1}^{z_{\alpha}}$ a set of $z_{\alpha}$ points in $\mathcal{X}_{\alpha}$ and $\Vert \cdot \Vert_{\bm{x}_{\alpha}^{z_{\alpha}}}$ the following discrete semi-norm 
\begin{equation*}
\Vert f^{\alpha} \Vert_{\bm{x}_{\alpha}^{z_{\alpha}}}^2 = \frac{1}{z_{\alpha}} \sum_{i=1}^{z_{\alpha}} w^{\alpha}(x_{\alpha}^{i})f^{\alpha}(x_{\alpha}^{i})^2.
\end{equation*}
An important aspect of the boosted least-squares projection is the fact that the chosen points $x_{\alpha}^{1},\ldots,x_{\alpha}^{z_{\alpha}}$ are realizations of dependent random variables whose measure is related to the measure $\rho_{\alpha}$ from Equation \Cref{eq:optimal_measure}. To select these $z_{\alpha}$ points in $\mathcal{X}_{\alpha}$, we draw $M$ times a $n_{\alpha}$-sample according to the product measure $\rho_{\alpha}^{\otimes n_{\alpha}}$ and select in this collection of $M$ samples the one minimizing a stability criterion (based on the empirical Gram matrix). We resample in this way, until a stability condition is verified. In a second time, we remove from this selected sample as many points as possible while maintaining the stability condition and guaranteeing a resulting number of samples $z_{\alpha}$ higher than $n_{\alpha,\min}=p_r n_\alpha$, with $p_r$ a constant independent of $m_\alpha$. For more details on the sampling procedure, see \cite{HaNoPe2019}. This sampling procedure allows us to ensure in expectation the stability of the projection. More precisely, \cite[Theorem 3.6]{HaNoPe2019} states that for any $f^{\alpha} \in L^2_{\mu_{\alpha}}$ and a fixed space $V_\alpha$, 
\begin{equation}
\label{eq:s-BLS_qo_projection}
\mathbb{E}(\Vert f^{\alpha} - Q_{V_{\alpha}}f^{\alpha} \Vert^2) \le \left(1 + \gamma \right) \Vert f^{\alpha} - P_{V_{\alpha}}f^{\alpha} \Vert^2,
\end{equation}
with $\gamma$ a constant that depends on $M$ and $p_r$, $\gamma = (1-\delta)^{-1}(1-\eta^M)^{-1}M $.  In the case where $V_{\alpha}$ is random, we can prove that
\begin{equation}
\label{eq:s-BLS_qo_projection-random}
\mathbb{E}(\Vert f^{\alpha} - Q_{V_{\alpha}}f^{\alpha} \Vert^2) \le \left(1 + \gamma \right) \mathbb{E}(\Vert f^{\alpha} - P_{V_{\alpha}}f^{\alpha} \Vert^2).
\end{equation}
By extension, the oblique projection $\mathcal{Q}_{V_{\alpha}}$ from $L^2_{\mu}$ to $V_{\alpha} \  \otimes \ L^2_{\mu_{\alpha^c}}$ such that $(\mathcal{Q}_{V_{\alpha}}u)(\cdot, x_{\alpha^c}) = Q_{V_{\alpha}}u(\cdot, x_{\alpha^c})$ is called a boosted weighted least-squares projection. Given some condition on the number of samples $n_{\alpha}$, the following lemma and theorem provide a stability result of the projection $Q_{V_{\alpha}}$ (their proofs are given in Appendix \ref{appendix:proof_lemma21} and \ref{appendix:proof_theorem22}).
\begin{lemma}\label{lem:bound_bls_projection}
	Let $Q_{V_{\alpha}}$ be the boosted least-squares projection verifying Equation \Cref{eq:s-BLS_qo_projection-random} for all $f^{\alpha} \in L^2_{\mu_{\alpha}}$.
	Let $\eta$ and $\delta$ be two constants, such that $0< \eta, \delta < 1$. If $n_{\alpha} \ge (- \delta +(1+\delta)\log(1+\delta))m_{\alpha}\log(2m_{\alpha}\eta^{-1})$, then for all $u\in L^2_{\mu}$, it holds
	\begin{equation*}
	\mathbb{E}(\Vert \mathcal{Q}_{V_{\alpha}}u \Vert^2) \le 2\left(1 + \gamma\right) \Vert u \Vert^2, \text{ with } \gamma \text{ defined by } (1-\delta)^{-1}(1-\eta^M)^{-1}M.
	\end{equation*}
\end{lemma}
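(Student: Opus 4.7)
The plan is to reduce the multivariate claim to a fiberwise univariate bound and then use a Pythagorean decomposition around $P_{V_\alpha}f$ to avoid losing the $\gamma$ factor that a naive triangle inequality would cost.

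\emph{Fiberwise reduction.} First I would apply Fubini together with the defining property $(\mathcal{Q}_{V_\alpha}u)(\cdot,x_{\alpha^c}) = Q_{V_\alpha}u(\cdot,x_{\alpha^c})$ to obtain
\[
\mathbb{E}(\|\mathcal{Q}_{V_\alpha}u\|^2) = \int_{\mathcal{X}_{\alpha^c}} \mathbb{E}\bigl(\|Q_{V_\alpha}u(\cdot,x_{\alpha^c})\|_{L^2_{\mu_\alpha}}^2\bigr)\, d\mu_{\alpha^c}(x_{\alpha^c}).
\]
It is therefore enough to establish the univariate estimate $\mathbb{E}\|Q_{V_\alpha}f\|_{L^2_{\mu_\alpha}}^2 \le 2(1+\gamma)\|f\|_{L^2_{\mu_\alpha}}^2$ for every $f \in L^2_{\mu_\alpha}$, then specialize to $f = u(\cdot,x_{\alpha^c})$ and integrate against $\mu_{\alpha^c}$.

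\emph{Univariate estimate.} For a fixed $f$, I would split $Q_{V_\alpha}f = P_{V_\alpha}f + (Q_{V_\alpha}f - P_{V_\alpha}f)$. Both summands lie in $V_\alpha$, so Cauchy--Schwarz gives
\[
\|Q_{V_\alpha}f\|^2 \le 2\|P_{V_\alpha}f\|^2 + 2\|Q_{V_\alpha}f - P_{V_\alpha}f\|^2 \le 2\|f\|^2 + 2\|Q_{V_\alpha}f - P_{V_\alpha}f\|^2,
\]
the last step using that $P_{V_\alpha}$ is an $L^2_{\mu_\alpha}$-contraction. The key ingredient is the Pythagorean identity: since $Q_{V_\alpha}f - P_{V_\alpha}f \in V_\alpha$ and $P_{V_\alpha}f - f \in V_\alpha^\perp$, we have
\[
\|Q_{V_\alpha}f - f\|^2 = \|Q_{V_\alpha}f - P_{V_\alpha}f\|^2 + \|P_{V_\alpha}f - f\|^2.
\]
Rearranging, taking expectations, and invoking the quasi-optimality bound \Cref{eq:s-BLS_qo_projection-random} gives
\[
\mathbb{E}\|Q_{V_\alpha}f - P_{V_\alpha}f\|^2 = \mathbb{E}\|Q_{V_\alpha}f - f\|^2 - \mathbb{E}\|P_{V_\alpha}f - f\|^2 \le \gamma\, \mathbb{E}\|P_{V_\alpha}f - f\|^2 \le \gamma \|f\|^2.
\]
Combining this with the previous display produces exactly $\mathbb{E}\|Q_{V_\alpha}f\|^2 \le 2(1+\gamma)\|f\|^2$, as needed.

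\emph{Role of the sample-size condition and the main subtlety.} The hypothesis on $n_\alpha$ enters the argument only as the Cohen--Migliorati-type sampling condition from \cite{HaNoPe2019} that, combined with the boosting over $M$ draws, validates \Cref{eq:s-BLS_qo_projection-random} with $\gamma = (1-\delta)^{-1}(1-\eta^M)^{-1}M$; everything else is structural. The one non-routine point is spotting that the Pythagorean identity turns the quasi-optimality estimate on $\|f - Q_{V_\alpha}f\|^2$ into a $\gamma$-small bound on $\|Q_{V_\alpha}f - P_{V_\alpha}f\|^2$ rather than an $O(1)$ bound — a naive triangle inequality would give a constant like $4+2\gamma$ in front of $\|u\|^2$ instead of the advertised $2(1+\gamma)$, which is what makes this decomposition essential.
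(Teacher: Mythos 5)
Your proof is correct, and its core is the same as the paper's: both arguments hinge on the Pythagorean identity $\Vert Q_{V_\alpha}f - f\Vert^2 = \Vert Q_{V_\alpha}f - P_{V_\alpha}f\Vert^2 + \Vert P_{V_\alpha}f - f\Vert^2$ to convert the quasi-optimality bound \Cref{eq:s-BLS_qo_projection-random} into $\mathbb{E}\Vert Q_{V_\alpha}f - P_{V_\alpha}f\Vert^2 \le \gamma\,\mathbb{E}\Vert f - P_{V_\alpha}f\Vert^2$, followed by the inequality $\Vert a+b\Vert^2 \le 2\Vert a\Vert^2 + 2\Vert b\Vert^2$ (which you label Cauchy--Schwarz; it is really the parallelogram bound) and contractivity of $P_{V_\alpha}$. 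Where you genuinely diverge is in the reduction from the multivariate operator $\mathcal{Q}_{V_\alpha}$ to the univariate $Q_{V_\alpha}$: you apply Tonelli/Fubini to the fibers $u(\cdot,x_{\alpha^c})$, prove the bound for each fiber, and integrate against $\mu_{\alpha^c}$, whereas the paper expands $u = \sum_k u_k^\alpha \otimes u_k^{\alpha^c}$ via its $\alpha$-singular value decomposition, uses orthogonality of the factors to split $\Vert u - \mathcal{Q}_{V_\alpha}u\Vert^2$ into a sum of univariate errors, and then performs the Pythagorean and parallelogram steps at the multivariate level. Your fiberwise route is somewhat more elementary (no SVD, no discussion of convergence of the series when $\mathrm{rank}_\alpha(u)=\infty$) and yields the same constant; the paper's route has the side benefit of establishing the multivariate quasi-optimality $\mathbb{E}\Vert u - \mathcal{Q}_{V_\alpha}u\Vert^2 \le (1+\gamma)\,\mathbb{E}\Vert u - \mathcal{P}_{V_\alpha}u\Vert^2$ as an explicit intermediate statement, which is reused elsewhere in the error analysis. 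Your closing remark is also accurate: the sample-size condition on $n_\alpha$ plays no role beyond validating \Cref{eq:s-BLS_qo_projection-random}, and a naive triangle inequality in place of the Pythagorean step would degrade the constant.
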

We deduce the following quasi-optimality result when approximating the principal subspaces of $u$ by those of $\mathcal{Q}_{V_{\alpha}} u$.
\begin{theorem}\label{th:qo_constant_bls}
	Under the same hypotheses and notations as in Lemma \ref{lem:bound_bls_projection}, for all $u \in L^2_{\mu}$,
	\begin{equation}\label{eq:boosted_projec_inequality}
	\mathbb{E}(\Vert \mathcal{Q}_{V_{\alpha}} u - \mathcal{P}_{U^{\star}_{\alpha}}\mathcal{Q}_{V_{\alpha}}u\Vert^2) \le 2\left(1 + \gamma \right) e_{r_{\alpha}}^{\alpha}(u)^2,
	\end{equation}
	where $e_{r_{\alpha}}^{\alpha}(u) = \Vert u - \mathcal{P}_{U_{\alpha}}u\Vert$ and $\Vert \mathcal{Q}_{V_{\alpha}} u - \mathcal{P}_{U^{\star}_{\alpha}}\mathcal{Q}_{V_{\alpha}}u\Vert$ are respectively the minimal reconstruction errors of $u$ and $\mathcal{Q}_{V_{\alpha}}u$ associated to the $\alpha$-principal subspaces $U_{\alpha}$ and $ U^{\star}_{\alpha}$ defined in Equations \Cref{eq:sol_principal_subs} and \Cref{eq:sol_approx_principal_subs} respectively.
\end{theorem}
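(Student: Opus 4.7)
The plan is to relate the error of the best rank-$r_{\alpha}$ subspace of the sketch $\mathcal{Q}_{V_{\alpha}}u$ to a rank-$r_{\alpha}$ subspace built from the true principal subspace $U_{\alpha}$ of $u$, and then invoke the stability bound from Lemma \ref{lem:bound_bls_projection} applied to the residual $u-\mathcal{P}_{U_{\alpha}}u$.

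First, I would exploit the optimality of $U^{\star}_{\alpha}$. Because $\mathcal{Q}_{V_{\alpha}}u$ takes values in $V_{\alpha}\otimes L^2_{\mu_{\alpha^c}}$, the subspace $U^{\star}_{\alpha}$ that solves \eqref{eq:sol_approx_principal_subs} with input $\mathcal{Q}_{V_{\alpha}}u$ is minimal over all $r_{\alpha}$-dimensional $W\subset L^2_{\mu_{\alpha}}$. So for any candidate $r_{\alpha}$-dimensional subspace $W$,
\begin{equation*}
\Vert \mathcal{Q}_{V_{\alpha}}u-\mathcal{P}_{U^{\star}_{\alpha}}\mathcal{Q}_{V_{\alpha}}u\Vert^2 \le \Vert \mathcal{Q}_{V_{\alpha}}u-\mathcal{P}_{W}\mathcal{Q}_{V_{\alpha}}u\Vert^2.
\end{equation*}

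Second, I would choose $W=\widetilde W$ to be the image of $U_{\alpha}$ under $Q_{V_{\alpha}}$, i.e. $\widetilde W=\mathrm{span}\{Q_{V_{\alpha}}v_k^{\alpha}:1\le k\le r_{\alpha}\}\subset V_{\alpha}$. Write the truncated SVD $\mathcal{P}_{U_{\alpha}}u = \sum_{k=1}^{r_{\alpha}}\sigma_k^{\alpha}v_k^{\alpha}(x_{\alpha})v_k^{\alpha^c}(x_{\alpha^c})$; by linearity of the univariate boosted least-squares operator $Q_{V_{\alpha}}$ in the $x_{\alpha}$-variable, the lifted oblique projection satisfies
\begin{equation*}
\mathcal{Q}_{V_{\alpha}}\mathcal{P}_{U_{\alpha}}u \;=\; \sum_{k=1}^{r_{\alpha}}\sigma_k^{\alpha}\bigl(Q_{V_{\alpha}}v_k^{\alpha}\bigr)(x_{\alpha})\,v_k^{\alpha^c}(x_{\alpha^c}) \;\in\; \widetilde W\otimes L^2_{\mu_{\alpha^c}}.
\end{equation*}
Since $\mathcal{P}_{\widetilde W}$ is the orthogonal projection onto $\widetilde W\otimes L^2_{\mu_{\alpha^c}}$ and minimizes the distance over that subspace, I get
\begin{equation*}
\Vert \mathcal{Q}_{V_{\alpha}}u-\mathcal{P}_{\widetilde W}\mathcal{Q}_{V_{\alpha}}u\Vert^2 \le \Vert \mathcal{Q}_{V_{\alpha}}u-\mathcal{Q}_{V_{\alpha}}\mathcal{P}_{U_{\alpha}}u\Vert^2 = \Vert \mathcal{Q}_{V_{\alpha}}(u-\mathcal{P}_{U_{\alpha}}u)\Vert^2,
\end{equation*}
using linearity of $\mathcal{Q}_{V_{\alpha}}$ one more time (if $\dim\widetilde W<r_{\alpha}$, pad arbitrarily inside $V_{\alpha}$).

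Finally, applying Lemma \ref{lem:bound_bls_projection} to the function $u-\mathcal{P}_{U_{\alpha}}u\in L^2_{\mu}$ (the hypothesis on $n_{\alpha}$ is inherited), taking expectations, and chaining the three inequalities yields
\begin{equation*}
\mathbb{E}\bigl(\Vert \mathcal{Q}_{V_{\alpha}}u-\mathcal{P}_{U^{\star}_{\alpha}}\mathcal{Q}_{V_{\alpha}}u\Vert^2\bigr) \le \mathbb{E}\bigl(\Vert \mathcal{Q}_{V_{\alpha}}(u-\mathcal{P}_{U_{\alpha}}u)\Vert^2\bigr) \le 2(1+\gamma)\,\Vert u-\mathcal{P}_{U_{\alpha}}u\Vert^2,
\end{equation*}
which is exactly \eqref{eq:boosted_projec_inequality}. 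The only non-routine point is the second step: it is tempting to try $W=U_{\alpha}$ directly, but $U_{\alpha}$ is not a subspace of $V_{\alpha}$ in general, so the comparison subspace must be pushed through $Q_{V_{\alpha}}$ in order for the rank-preservation argument to produce a function against which $\mathcal{Q}_{V_{\alpha}}u$ can be measured.
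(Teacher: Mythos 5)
Your proposal is correct and follows essentially the same route as the paper: both compare $\mathcal{P}_{U_{\alpha}^{\star}}\mathcal{Q}_{V_{\alpha}}u$ against the competitor $\mathcal{Q}_{V_{\alpha}}\mathcal{P}_{U_{\alpha}}u$ (you phrase the optimality via the subspace $\widetilde W=Q_{V_{\alpha}}(U_{\alpha})$, the paper via the rank-$r_{\alpha}$ function $v=\mathcal{Q}_{V_{\alpha}}\mathcal{P}_{U_{\alpha}}u$, which is the same comparison), and then both apply Lemma \ref{lem:bound_bls_projection} to the residual $u-\mathcal{P}_{U_{\alpha}}u$. Your version just makes explicit the rank-preservation and padding details that the paper leaves implicit.
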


\subsection{Estimation of the $\alpha$-principal subspaces}\label{subs:estimation_pcs_approx}
\subsubsection{Accuracy of the empirical $\alpha$-principal subspaces}
~
Let $X_{\alpha^c}$ be a random vector associated with the measure $\mu_{\alpha^c}$. Hence, the approximation $U_{\alpha}^{\star}$ of the $\alpha$-principal subspace, which is solution of Equation \Cref{eq:sol_approx_principal_subs}, is equivalently defined as the solution of
\begin{equation*}
\min_{\text{dim}(U_{\alpha}^{\star}) = r_{\alpha}} \mathbb{E}\left(\Vert Q_{V_{\alpha}}u(\cdot, X_{\alpha^c}) - \mathcal{P}_{U_{\alpha}^{\star}} Q_{V_{\alpha}}u(\cdot, X_{\alpha^c})\Vert^2_{L^2_{\mu_{\alpha}}
}\right),
\end{equation*}
where $Q_{V_{\alpha}}u(\cdot, X_{\alpha^c})$ is now a function-valued random variable. An estimation of $U_{\alpha}^{\star}$, denoted $\widehat{U}_{\alpha}^{\star}$, can then be obtained using $z_{\alpha^c}$ independent and identically distributed (i.i.d) samples of $X_{\alpha^c}$, noted $\{x_{\alpha^c}^l\}_{l=1}^{z_{\alpha^c}}$, and by solving
\begin{equation}\label{eq:empirical_pcs}
\min_{\dim(\widehat{U}_{\alpha}^{\star})=r_{\alpha}}\frac{1}{z_{\alpha^c}} \sum_{l=1}^{z_{\alpha^c}} \Vert Q_{V_{\alpha}}u(\cdot, x_{\alpha^c}^l) - \mathcal{P}_{\widehat{U}_{\alpha}^{\star}}Q_{V_{\alpha}}u(\cdot, x_{\alpha^c}^l) \Vert_{L^2_{\mu_{\alpha}(\mathcal{X}_{\alpha})}}^2.
\end{equation}
See Appendix \ref{appendix:estimation_pcs_approx} for the practical solution of \Cref{eq:empirical_pcs}.

\begin{remark}
	The determination of $\hat{U}_{\alpha}^{\star}$ depends on the samples $\{x_{\alpha^c}^l\}_{l=1}^{z_{\alpha^c}}$ but also on the projection $\mathcal{Q}_{V_{\alpha}}$ and thus on the samples $\{x_{\alpha}^i\}_{i=1}^{z_{\alpha}}$.
\end{remark}
\begin{remark} An interesting question is to compare the behavior of the reconstruction error $\Vert \mathcal{Q}_{V_{\alpha}}u - \mathcal{P}_{\hat{U}_{\alpha}^{\star}}\mathcal{Q}_{V_{\alpha}}u \Vert $ associated with the empirical subspace $\hat{U}_{\alpha}^{\star}$ with the minimal reconstruction error $\Vert \mathcal{Q}_{V_{\alpha}}u - \mathcal{P}_{U_{\alpha}^{\star}}\mathcal{Q}_{V_{\alpha}}u \Vert $ associated with $U_{\alpha}^{\star}$. In \cite{Wahl2020}, the authors derive high-probability bounds for the reconstruction error of empirical principal subspaces under strong assumptions on the function $u$, that are hardly verified in practice. Also, in \cite{CoNoGi2020}, the authors show that in the case where the minimal reconstruction error $e_{r_{\alpha}}^{\alpha}(u)$ has a certain algebraic decay, the same rate of convergence can be obtained for the reconstruction error of the empirical subspaces if the number of samples $z_{\alpha^c}$ is chosen sufficiently high, but this condition seems to be pessimistic in many practical cases. A major difficulty to obtain a similar result in our setting comes from the fact that we consider the $\alpha$-principal subspaces of $\mathcal{Q}_{V_{\alpha}}u$, not of $u$. Choosing a sample-based projection $\mathcal{Q}_{V_{\alpha}}$ where the samples are not deterministic but randomly drawn from a certain measure implies that $\mathcal{Q}_{V_{\alpha}}$ is random and depends on samples of the function $u$, which makes tricky the interpretability of the hypotheses made on $u$. For these reasons, in the next section, we propose an adaptive strategy to estimate the empirical $\alpha$-principal subspaces $\widehat{U}_{\alpha}^{\star}$ with a given tolerance in order to choose a small number of samples $z_{\alpha^c}$.
\end{remark}
\subsubsection{Adaptive estimation of the $\alpha$-principal subspaces}\label{subs:adaptive_pca}
~
For a given number of samples $z_{\alpha^c}$, the reconstruction error of the empirical $\alpha$-principal subspace $\widehat{U}_{\alpha}^{\star}$ is estimated by leave-one-out cross validation. While this error is greater than the desired tolerance $\varepsilon$, we increase the dimension of $\widehat{U}_{\alpha}^{\star}$. If for $\dim(\widehat{U}_{\alpha}^{\star}) =z_{\alpha^c}$, the tolerance is not reached, we increase the number of samples $z_{\alpha^c}$ and again estimate the leave-one-out error. We start from $z_{\alpha^c}=1$ and impose an upper bound, $z_{\alpha^c} \le k_{PCA} m_{\alpha}$, where $k_{PCA} \in \mathbb{N}^{\star}$ is a sampling factor. This procedure, presented in Appendix \ref{appendix:estimation_pcs_approx} in Algorithm \ref{algo:adapt_pcs_precision}, provides in many experiments a small number of samples $z_{\alpha^c}$ to get the desired accuracy.

\section{Learning tree tensor networks using PCA}\label{sec:learning_ttnetworks_PCA}
~
In this section, we present an algorithm that constructs an approximation of a function $u \in L^2_{\mu}$ in tree-based tensor format. After briefly recalling the definition of tree tensor networks, we present in detail the algorithm we propose, and then show to what extent it is possible to bound the error of the resulting approximation.

\subsection{Dimension partition tree}
\begin{figure}[h!]
	\begin{center}
\includegraphics{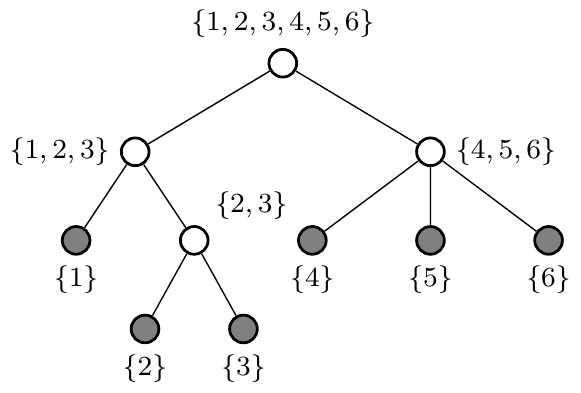}		
		\caption{Dimension partition tree over $\{1, \hdots, 6\}$ with its leaves represented in gray}\label{fig:treewithleaves}
		\end{center}
\end{figure}
A dimension partition tree $T$ over $D = \{1, \hdots, d\}$ is a collection of subsets in $D$ having the following properties: 
\begin{itemize}
	\item $D$ is the root of the tree $T$, \item a node $\alpha \in T$ is a non empty subset of $D$, whose cardinality is denoted by $\# \alpha$, 
	\item for each node $\alpha \in T$, the set of sons $S(\alpha)$ of $\alpha$ is either empty (for $\# \alpha =1$) or forms a partition of $\alpha$ with $\#S(\alpha) \ge 2$.
\end{itemize}
The nodes $\alpha$ such that $S(\alpha) = \emptyset$ are the leaves of the tree $T$ and the set containing all leaves is denoted $\mathcal{L}(T)$. As an illustration, Fig. \ref{fig:treewithleaves} shows a particular dimension partition tree, with $d =6$, and
\begin{equation}
T = \{\{1\},\{2\},\{3\},\{4\}, \{5\}, \{6\}, \{2,3\},\{1,2,3\},\{4,5,6\},\{1,2,3,4,5,6\}\}.
\end{equation} 
For a node $\alpha$, $l(\alpha)$ denotes the level of the node $\alpha$ in the tree $T$. It is defined recursively from the root to the leaves, such that $l(D) = 0$ and if $\beta \in S(\alpha)$, $l(\beta) =l(\alpha) + 1$. The maximum level of the nodes in $T$ is the depth of the tree: $\text{depth}(T) = \max_{\alpha \in T} l(\alpha)$.
\subsection{Tree tensor networks}
For $T$ a dimension tree over $D$, we define the $T$-rank of a function $v$, $\text{rank}_{T}(v)$, as the tuple $\text{rank}_{T}(v) = \{\text{rank}_{\alpha}(v)\}_{\alpha \in T}$. Then, we define an approximation format, $\mathcal{T}_r^T(V)$ which is the set of functions in some subspace $V \subset L^2_{\mu}$ with $T$-rank bounded by $r = (r_{\alpha})_{\alpha \in T}$,
\begin{equation}\label{def:Tranktensors}
\mathcal{T}_r^T(V) = \{v \in V : \text{rank}_T(v) \le r\} = \bigcap_{\alpha \in T} \{v \in V : \text{rank}_{\alpha}(v) \le r_{\alpha} \}.
\end{equation}
Elements of $\mathcal{T}_r^T(V)$ are tree tensor networks. A function $v \in \mathcal{T}_r^T(V)$ can be written under the form (\ref{eq:svd_multi_function})
\begin{equation*}
v(x) = \sum_{k=1}^{\text{rank}_{\alpha}(v)}\sigma_k^{\alpha} v_k^{\alpha}(x_{\alpha})  v_k^{\alpha^c}(x_{\alpha^c}), \text{ for each } \alpha \in T
\end{equation*}
\subsection{Description of the algorithm}
~
For a given dimension tree $T$, the algorithm we propose to determine the parameters of a tree tensor network approximation of $u^{\star}$ relies on a leaf-to-root exploration of $T$, a sequential estimation of $\alpha$-principal subspaces (see Appendix \ref{appendix:estimation_pcs_approx} for more details), and a final least-squares projection of $u$ on a product of subspaces.\\ 

The first step of the algorithm consists in computing estimations $\widehat{U}_{\alpha}^{\star}$ of $\alpha$-principal subspace of $u$ for each node of the tree $\alpha \in T \setminus \{D\}$. As explained in Section \ref{sec:alpha_principal_subspaces}, each subspace $\widehat{U}_{\alpha}^{\star}$ is searched in a finite-dimensional subspace $V_\alpha$. Depending on the position of the node $\alpha$, two cases are distinguished.
On the one hand, for a leaf node $\alpha \in \mathcal{L}(T)= \{  \{1\},\ldots,\{ d \} \}$, $V_{\alpha}$ is a given finite dimensional space in $L_{\mu_{\alpha}}^2(\mathcal{X}_{\alpha})$ (e.g. splines, wavelets, polynomials, ...).
On the other hand, for an internal node $\alpha \notin \mathcal{L}(T)$, $V_{\alpha}$ is chosen equal to $\otimes_{\beta \in S(\alpha)}\widehat{U}_{\beta}^{\star}$, that is to the tensor product space of the approximated $\alpha$-principal subspaces of the sons of $\alpha$. A each subspace $\widehat{U}_{\beta}^{\star}$ is a statistical estimation based on evaluations of $u$ at randomly chosen points in $\mathcal{X}$, $V_{\alpha}$ is a random space.

The second step of the algorithm is the projection of the function $u$ on the tensor product space formed by the $\alpha$-principal subspaces of the sons of the root of the tree, $S(D)$, that is to say
\begin{equation}
u^{\star} = \mathcal{Q}_{V_{D}}u \text{ where } V_D = \bigotimes_{\alpha \in S(D)} \widehat{U}_{\alpha}^{\star}, 
\end{equation}
with $\mathcal{Q}_{V_{D}}$ a boosted optimal least-squares projection. The final approximation is in $\mathcal{T}_r^T(V)$ with $r = (r_{\alpha})_{\alpha \in T}$ and $r_{\alpha} = \dim(\hat{U}_{\alpha}), \alpha \in T \setminus \{D\}$ and $V = \bigotimes_{\nu = 1}^d V_{\nu}$. A synthetic description of this procedure is summarized in Algorithm \ref{algo:constr_approx_TBT}.\\

\begin{algorithm}[h!]
	\caption{Construction of a tree tensor network approximation}\label{algo:constr_approx_TBT}
	\begin{algorithmic}
		\STATE \hspace{-0.4cm}\textbf{Inputs:} dimension tree $T$, function to approximate $u$, measure $\mu$, finite-dimensional spaces $V_{\nu} \text{ for } \nu \in \mathcal{L}(T)$, desired tolerance $\varepsilon$.
		\STATE \hspace{-0.4cm}\textbf{Outputs:} approximation $u^{\star}$ in $\mathcal{T}_r^T(V)$
		\FOR{$\alpha \in T$ going by decreasing level}
		\IF{$\alpha \notin \mathcal{L}(T)$}
		\STATE Set $V_{\alpha} = \bigotimes_{\beta \in S(\alpha)}\widehat{U}^{\star}_{\beta}$
		\ENDIF
		\STATE Compute $\widehat{U}_{\alpha}^{\star} \subset V_{\alpha}$, the estimation of the $\alpha$-principal subspaces with relative reconstruction error $\varepsilon$, thanks to Algorithm \ref{algo:adapt_pcs_precision} (see Appendix \ref{appendix:estimation_pcs_approx} for a detailed description of this algorithm).
		\ENDFOR
		\STATE Set $V_D = \bigotimes_{\alpha \in S(D)} \widehat{U}_{\alpha}^{\star}$.
		\STATE Compute $u^{\star} = \mathcal{Q}_{V_D}u$.
	\end{algorithmic}
\end{algorithm}

\begin{remark}
	In practice, the boosted optimal weighted least-squares projection requires sampling from the optimal measure from Equation (\ref{eq:optimal_measure}), whose expression depends on the position of the node $\alpha$ in the tree (see Appendix \ref{app:explicitExpression} for explicit expressions of these optimal measures). 
\end{remark}

\subsection{Error analysis}\label{subs:error_analysis}
~
The following lemma provides a first error bound for the error of approximation without any assumption on the reconstruction error of the empirical $\alpha$-principal subspace $\hat{U}_{\alpha}^{\star}$.
\begin{lemma}\label{lem:bound_final_error_qo_proj}
	Assume that for all $\alpha \in T$, $Q_{V_{\alpha}}$ is the boosted optimal weighted least-squares projection verifying the assumptions from Theorem \ref{th:qo_constant_bls}.\\
	The error of approximation is bounded in expectation as follows,
	\begin{equation*}
	\mathbb{E}(\Vert u - u^{\star} \Vert^2) \le \sum_{\alpha \in T \setminus D} (2C_1)^{l(\alpha)}\mathbb{E}(\Vert \mathcal{Q}_{V_{\alpha}}u - \mathcal{P}_{\hat{U}_{\alpha}^{\star}}\mathcal{Q}_{V_{\alpha}}u \Vert^2) + \sum_{\alpha \in \mathcal{L}(T)} \frac{1}{2} (2C_1)^{l(\alpha)+1} e^{\alpha,dis}_{m_{\alpha}}(u)^2,
	\end{equation*}
	where $C_1:=2(\gamma+1)$, with $\gamma$ defined in Theorem \ref{th:qo_constant_bls} depending on the boosted optimal weighted least-squares projection $Q_{V_{\alpha}}$, $\Vert \mathcal{Q}_{V_{\alpha}}u - \mathcal{P}_{\hat{U}_{\alpha}^{\star}}\mathcal{Q}_{V_{\alpha}}u \Vert^2$ is the reconstruction error associated to $\hat{U}_{\alpha}^{\star}$, and $e^{\alpha,dis}_{m_{\alpha}}(u) = \Vert u - \mathcal{P}_{V_{\alpha}}u \Vert_{L_{\mu}^2}$ is the error of discretization due to the use of a finite-dimensional space $V_{\alpha}$ for the leaf $\alpha$.
\end{lemma}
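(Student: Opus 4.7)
My plan is to set up a root-to-leaves recursion on the quantities $E_\alpha:=\mathbb{E}(\Vert u-\mathcal{P}_{\widehat{U}^{\star}_{\alpha}}u\Vert^2)$ for $\alpha\in T\setminus D$, where $\mathcal{P}_{\widehat{U}^{\star}_{\alpha}}$ denotes the orthogonal projection from $L^2_\mu$ onto $\widehat{U}^{\star}_{\alpha}\otimes L^2_{\mu_{\alpha^c}}$. The argument relies on three building blocks: a node-level inequality bounding $E_\alpha$ by $\mathbb{E}(\Vert u-\mathcal{P}_{V_\alpha}u\Vert^2)$ plus the ``truncation'' term $\epsilon_\alpha:=\mathbb{E}(\Vert\mathcal{Q}_{V_\alpha}u-\mathcal{P}_{\widehat{U}^{\star}_{\alpha}}\mathcal{Q}_{V_\alpha}u\Vert^2)$; a decoupling step expressing $\mathbb{E}(\Vert u-\mathcal{P}_{V_\alpha}u\Vert^2)$ in terms of the $E_\beta$ of the children of $\alpha$, or, at a leaf, of the discretisation error $e^{\alpha,dis}_{m_\alpha}(u)^2$; and a root estimate linking $\mathbb{E}(\Vert u-u^\star\Vert^2)$ to $\sum_{\alpha\in S(D)}E_\alpha$. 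Unrolling the first two steps from the root down to each node then accumulates the factors $(2C_1)^{l(\alpha)}$ and $\tfrac12(2C_1)^{l(\alpha)+1}$ announced in the statement.

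For the node-level inequality, the inclusion $\widehat{U}^{\star}_{\alpha}\subset V_\alpha$ places $\mathcal{P}_{\widehat{U}^{\star}_{\alpha}}\mathcal{Q}_{V_\alpha}u$ inside $\widehat{U}^{\star}_{\alpha}\otimes L^2_{\mu_{\alpha^c}}$, so the best-approximation property of the orthogonal projection gives $\Vert u-\mathcal{P}_{\widehat{U}^{\star}_{\alpha}}u\Vert\le\Vert u-\mathcal{P}_{\widehat{U}^{\star}_{\alpha}}\mathcal{Q}_{V_\alpha}u\Vert$, and $(a+b)^2\le 2(a^2+b^2)$ then yields
\begin{equation*}
\Vert u-\mathcal{P}_{\widehat{U}^{\star}_{\alpha}}u\Vert^2\le 2\Vert u-\mathcal{Q}_{V_\alpha}u\Vert^2+2\Vert\mathcal{Q}_{V_\alpha}u-\mathcal{P}_{\widehat{U}^{\star}_{\alpha}}\mathcal{Q}_{V_\alpha}u\Vert^2.
\end{equation*}
Taking expectation and invoking the random-subspace stability estimate \Cref{eq:s-BLS_qo_projection-random} fiber-wise in $x_{\alpha^c}$ (via Fubini, conditionally on $V_\alpha$) converts $\mathbb{E}(\Vert u-\mathcal{Q}_{V_\alpha}u\Vert^2)$ into a multiple of $\mathbb{E}(\Vert u-\mathcal{P}_{V_\alpha}u\Vert^2)$.

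For the decoupling at an internal node, the tensor structure $V_\alpha=\bigotimes_{\beta\in S(\alpha)}\widehat{U}^{\star}_{\beta}$ on disjoint variable groups makes the projections $\mathcal{P}_{\widehat{U}^{\star}_{\beta}}$ pairwise commute and satisfies $\mathcal{P}_{V_\alpha}=\prod_{\beta\in S(\alpha)}\mathcal{P}_{\widehat{U}^{\star}_{\beta}}$. Ordering $S(\alpha)$ arbitrarily, the telescoping identity $I-\prod_\beta\mathcal{P}_{\widehat{U}^{\star}_{\beta}}=\sum_\beta\bigl(\prod_{\beta'<\beta}\mathcal{P}_{\widehat{U}^{\star}_{\beta'}}\bigr)(I-\mathcal{P}_{\widehat{U}^{\star}_{\beta}})$ writes $u-\mathcal{P}_{V_\alpha}u$ as a sum of pairwise orthogonal pieces (for $\beta<\beta''$, the $\beta''$-th product contains the factor $\mathcal{P}_{\widehat{U}^{\star}_{\beta}}$, which is orthogonal to the range $I-\mathcal{P}_{\widehat{U}^{\star}_{\beta}}$ of the $\beta$-th piece), each of norm at most $\Vert u-\mathcal{P}_{\widehat{U}^{\star}_{\beta}}u\Vert$. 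Pythagoras then delivers $\Vert u-\mathcal{P}_{V_\alpha}u\Vert^2\le\sum_{\beta\in S(\alpha)}\Vert u-\mathcal{P}_{\widehat{U}^{\star}_{\beta}}u\Vert^2$, and at a leaf $V_\alpha$ is deterministic so $\Vert u-\mathcal{P}_{V_\alpha}u\Vert=e^{\alpha,dis}_{m_\alpha}(u)$. At the root, $u^\star=\mathcal{Q}_{V_D}u$ (no truncation); the orthogonal split $u-\mathcal{Q}_{V_D}u=(u-\mathcal{P}_{V_D}u)-\mathcal{Q}_{V_D}(u-\mathcal{P}_{V_D}u)$ together with \Cref{lem:bound_bls_projection} and the same commuting-projection inequality at $D$ bounds $\mathbb{E}(\Vert u-u^\star\Vert^2)$ by a constant times $\sum_{\alpha\in S(D)}E_\alpha$, and iterating the node recursion closes the estimate.

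The main technical hurdle will be the bookkeeping of the randomness of the subspaces: $V_\alpha$ depends on the samples drawn while estimating the children $\widehat{U}^{\star}_{\beta}$, and $\widehat{U}^{\star}_{\alpha}$ depends on fresh samples drawn at the node $\alpha$ itself. Every inequality above therefore has to be read as: first condition on all samples drawn strictly below level $l(\alpha)$, so that $V_\alpha$ is fixed and \Cref{lem:bound_bls_projection} and \Cref{eq:s-BLS_qo_projection-random} apply directly; then take expectation over the samples drawn at level $l(\alpha)$; finally integrate over the descendants using the tower property. With all stability and triangle-squaring constants bounded uniformly by $2C_1=4(1+\gamma)$, the recursion closes cleanly and produces the announced geometric factor $(2C_1)^{l(\alpha)}$.
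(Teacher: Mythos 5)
Your argument is correct and follows essentially the same route as the paper's proof: per-node quasi-optimality of $\mathcal{Q}_{V_{\alpha}}$ relative to $\mathcal{P}_{V_{\alpha}}$, the commuting-projection Pythagorean splitting of $\Vert u - \mathcal{P}_{V_{\alpha}}u\Vert^2$ over the sons (your telescoping identity is exactly the content of Lemma \ref{lem:ineq_proj_sons}), a triangle-and-square step producing the reconstruction terms $\mathbb{E}(\Vert \mathcal{Q}_{V_{\alpha}}u - \mathcal{P}_{\hat{U}_{\alpha}^{\star}}\mathcal{Q}_{V_{\alpha}}u\Vert^2)$, and a root-to-leaves recursion accumulating the factors $(2C_1)^{l(\alpha)}$. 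The one minor deviation is at the root, where your split combined with Lemma \ref{lem:bound_bls_projection} gives a constant $1+C_1$ rather than the $C_1$ needed to match the stated coefficients exactly; applying the fiber-wise stability estimate \Cref{eq:s-BLS_qo_projection-random} directly at $D$, as you already do at every other node, restores the exact constant.
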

Making further assumptions on the reconstruction error of the empirical $\alpha$-principal subspace $\hat{U}_{\alpha}^{\star}$, we deduce the theorem hereafter.
\begin{theorem}\label{th:final_err_bound}
	Assume that for all $\alpha \in T$, $Q_{V_{\alpha}}$ is the boosted optimal weighted least-squares projection verifying the assumptions from Theorem \ref{th:qo_constant_bls}.\\
	Assume that for all $\alpha \in T \setminus D$, the empirical $\alpha$-principal subspaces of $\mathcal{Q}_{V_{\alpha}}u$ solutions of Equation \Cref{eq:empirical_pcs}, denoted $\widehat{U}_{\alpha}^{\star}$, are such that the reconstruction errors verify
	\begin{equation}\label{eq:hyp_ineq_approx_empirical_subspaces}
	\mathbb{E}(\Vert \mathcal{Q}_{V_{\alpha}}u - \mathcal{P}_{\widehat{U}_{\alpha}^{\star}}\mathcal{Q}_{V_{\alpha}}u \Vert^2\vert \mathcal{Q}_{V_{\alpha}}u)  \le C_2 \mathbb{E}(\Vert \mathcal{Q}_{V_{\alpha}}u - \mathcal{P}_{U_{\alpha}^{\star}}\mathcal{Q}_{V_{\alpha}}u \Vert^2\vert \mathcal{Q}_{V_{\alpha}}u ),
	\end{equation}
	where $\Vert \mathcal{Q}_{V_{\alpha}}u - \mathcal{P}_{U_{\alpha}^{\star}}\mathcal{Q}_{V_{\alpha}}u \Vert^2$ is the reconstruction error associated with the $\alpha$-principal subspace of $U_{\alpha}^{\star}$ solution of Equation \Cref{eq:sol_approx_principal_subs}. Then the error of approximation is bounded in expectation as follows: 
	\begin{equation}\label{eq:error_bound}
	\mathbb{E}(\Vert u - u^{\star} \Vert^2) \le C_1C_2 \sum_{\alpha \in T \setminus D} (2C_1)^{l(\alpha)} e_{r_{\alpha}}^{\alpha}(u)^2 + \sum_{\alpha \in \mathcal{L}(T)} \frac{1}{2} (2C_1)^{l(\alpha)+1} e_{m_{\alpha}}^{\alpha,dis}(u)^2,
	\end{equation}
	where $C_1$ and $e_{m_{\alpha}}^{\alpha,dis}(u)$ are defined in Lemma \ref{lem:bound_final_error_qo_proj}.
\end{theorem}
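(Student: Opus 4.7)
The plan is to deduce the theorem from Lemma \ref{lem:bound_final_error_qo_proj} by replacing the term $\mathbb{E}(\Vert \mathcal{Q}_{V_{\alpha}}u - \mathcal{P}_{\hat{U}_{\alpha}^{\star}}\mathcal{Q}_{V_{\alpha}}u \Vert^2)$ appearing in that bound by a quantity involving the ideal truncation error $e_{r_{\alpha}}^{\alpha}(u)^2$. Lemma \ref{lem:bound_final_error_qo_proj} already does the work of propagating local reconstruction errors through the levels of the tree, together with the discretization errors at the leaves, so only the node-wise bound needs to be refined.

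First I would take hypothesis \Cref{eq:hyp_ineq_approx_empirical_subspaces} and integrate out the conditioning on $\mathcal{Q}_{V_{\alpha}}u$ by applying the tower property, which gives
\begin{equation*}
\mathbb{E}\bigl(\Vert \mathcal{Q}_{V_{\alpha}}u - \mathcal{P}_{\widehat{U}_{\alpha}^{\star}}\mathcal{Q}_{V_{\alpha}}u \Vert^2\bigr)\le C_2\, \mathbb{E}\bigl(\Vert \mathcal{Q}_{V_{\alpha}}u - \mathcal{P}_{U_{\alpha}^{\star}}\mathcal{Q}_{V_{\alpha}}u \Vert^2\bigr).
\end{equation*}
Next I would invoke Theorem \ref{th:qo_constant_bls}, which states (with $C_1 = 2(1+\gamma)$) that
\begin{equation*}
\mathbb{E}\bigl(\Vert \mathcal{Q}_{V_{\alpha}}u - \mathcal{P}_{U_{\alpha}^{\star}}\mathcal{Q}_{V_{\alpha}}u \Vert^2\bigr)\le C_1\, e_{r_{\alpha}}^{\alpha}(u)^2.
\end{equation*}
Composing these two inequalities yields the node-wise bound
\begin{equation*}
\mathbb{E}\bigl(\Vert \mathcal{Q}_{V_{\alpha}}u - \mathcal{P}_{\widehat{U}_{\alpha}^{\star}}\mathcal{Q}_{V_{\alpha}}u \Vert^2\bigr)\le C_1 C_2\, e_{r_{\alpha}}^{\alpha}(u)^2,
\end{equation*}
which, substituted into the inequality of Lemma \ref{lem:bound_final_error_qo_proj}, gives exactly the bound \Cref{eq:error_bound}. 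The leaf-discretization term is untouched by this substitution and is carried over verbatim from Lemma \ref{lem:bound_final_error_qo_proj}.

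The only delicate point I anticipate is justifying that Theorem \ref{th:qo_constant_bls} is applicable at internal nodes $\alpha$ where $V_{\alpha} = \bigotimes_{\beta \in S(\alpha)} \widehat{U}_{\beta}^{\star}$ is itself a random subspace, constructed from samples drawn at lower levels of the tree. This is handled by invoking the random-$V_\alpha$ version of the stability result, namely \Cref{eq:s-BLS_qo_projection-random}, which underlies Lemma \ref{lem:bound_bls_projection} and hence Theorem \ref{th:qo_constant_bls}; conditioning on the samples used to build $V_\alpha$ (so that $V_\alpha$ is frozen) and then taking total expectation formalises this step. Once this measurability/independence bookkeeping is carried out, the rest of the proof is the direct chain of inequalities described above.
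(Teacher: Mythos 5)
Your proposal is correct and follows essentially the same route as the paper's own proof: take total expectation of the conditional hypothesis \Cref{eq:hyp_ineq_approx_empirical_subspaces} via the tower property, bound $\mathbb{E}(\Vert \mathcal{Q}_{V_{\alpha}}u - \mathcal{P}_{U_{\alpha}^{\star}}\mathcal{Q}_{V_{\alpha}}u \Vert^2)$ by $C_1 e_{r_{\alpha}}^{\alpha}(u)^2$ using Theorem \ref{th:qo_constant_bls}, and substitute the resulting node-wise bound $C_1C_2\, e_{r_{\alpha}}^{\alpha}(u)^2$ into Lemma \ref{lem:bound_final_error_qo_proj}. Your additional remark on handling the randomness of $V_\alpha$ at internal nodes via \Cref{eq:s-BLS_qo_projection-random} is consistent with how the paper treats this point.
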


In the upper bound from \Cref{eq:error_bound}, 
the first term is related to the error in the estimation of the principal components, while the second term comes from the discretization error due to the introduction of feature spaces. Assumption \Cref{eq:hyp_ineq_approx_empirical_subspaces}
is related to the discussion from section \ref{subs:estimation_pcs_approx}. From \Cref{eq:error_bound}, noting that 
for all $\alpha$, $e^\alpha_{r_\alpha}(u)$ and $e_{m_{\alpha}}^{\alpha,dis}(u)$ are bounded by the best approximation error in $ \mathcal{T}_r^T(V)$, 
we deduce a quasi-optimality result in expectation
$$
\mathbb{E}(\Vert u - u^{\star} \Vert^2) \le \widetilde C \min_{v\in \mathcal{T}_r^T(V)} \Vert u - v \Vert^2 ,
$$ 
with constant $\widetilde C$ depending on $C_1$, $C_2$ and the dimension tree.

%

\section{Tree adaptation}\label{sec:tree_adaptation}	
~
The choice of the tree may have a significant impact on the complexity required to reach a certain precision. Several numerical illustrations that underline this issue are presented in \cite{GrelierNouyChevreuil2018} or \cite{Haberstich2020}. 

In this section, we propose a strategy to find a tree $T$ with the objective of reducing the number of evaluations necessary to get a certain accuracy.
Other strategies that aims at performing tree optimization are presented in \cite{Haberstich2020} and compared on numerical examples.
The proposed strategy includes the tree optimization inside the algorithm for the construction of the approximation (Algorithm \ref{algo:constr_approx_TBT} presented in Section \ref{sec:learning_ttnetworks_PCA}). 
As it will be explained in Section \ref{par:complexity_analysis}, the number of evaluations $n$ necessary to get a desired accuracy $\varepsilon$ using this algorithm is related to the storage complexity, defined by
\begin{equation}\label{eq:storage_complexity_global}
\mathcal{S}(r,T)=\sum_{\alpha \notin \mathcal{L}(T)} r_{\alpha} \prod_{\beta \in S(\alpha)}r_{\beta} + \sum_{\alpha \in \mathcal{L}(T)} r_{\alpha} m_{\alpha}, 
\end{equation}
with $r=r(\varepsilon)$ the ranks for achieving the precision $\varepsilon$.
For a given precision $\varepsilon$, as the storage complexity associated to the leaves nodes $\sum_{\alpha \in \mathcal{L}(T)} r_{\alpha}(\varepsilon) m_{\alpha}$ is independent of the choice of the tree, minimizing $\mathcal{S}(r(\varepsilon),T)$ amounts at minimizing the following cost function
\begin{equation}\label{eq:cost_function_global}
\mathcal{C}(r(\varepsilon),T)=\sum_{\alpha \notin \mathcal{L}(T)} r_{\alpha}(\varepsilon) \prod_{\beta \in S(\alpha)}r_{\beta}(\varepsilon).
\end{equation}

Hence, to minimize $\mathcal{C}(r(\varepsilon),T)$, it seems interesting to look for a strategy which reduces the $\alpha$-ranks $r_{\alpha}$, for all interior nodes $\alpha \in T \setminus \mathcal{L}(T)$. \\

At this point, we can list two major difficulties for this objective. First, as the number of possible trees $T$ scales exponentially in the dimension $d$, finding the best tree is a combinatorial problem. Secondly, for each considered tree $T$, the $\alpha$-ranks $(r_\alpha(\varepsilon))_{\alpha \in T \setminus \mathcal{L}(T)}$ to reach a precision $\varepsilon$ are a priori unknown, and need to be estimated using evaluations of $u$. It is then obvious that an exhaustive search for the best dimension tree is completely unrealistic from a computational point of view in the context of costly evaluations. \\ 

To circumvent some of these difficulties, we propose to progressively construct a dimension partition tree by suitable pairings of variables.
By pairing variables from the leaves to the root, we indeed reduce sharply the number of possible trees (and thus the number of $\alpha$-ranks to be evaluated). However the number of remaining {pairings of variables} to explore may remain large, this is why we also propose a stochastic strategy, that will select randomly a reduced number of {pairings} but preferentially the ones with low $\alpha$-ranks.

\subsection{Estimation of $\alpha$-ranks}\label{subs:eps_rank_estimation}
~
Performing tree optimization requires the estimations of $\alpha$-ranks $r_{\alpha}(\varepsilon)$ for reaching a precision $\varepsilon$. These estimations require evaluations of the function $u$, and this cost (denoted $n_{optim}$) should be reasonable compared to the number $n$ of evaluations required for constructing the approximation for a given tree. \\

To estimate these $\alpha$-ranks, a strategy based on an adaptive cross approximation technique \cite{Bebendorf2000} is proposed in \cite{BallaniGrasedyck2014}. Inspired by this work, we propose in the following a strategy based on leave-one-out cross validation to estimate the $\alpha$-ranks $r_{\alpha}(\varepsilon)$ to achieve an empirical relative error $\varepsilon$. To do this, we consider the matrix of the evaluations of $u$, $\bm{B}^{\alpha} = \{ u(x_{\alpha}^l, x_{\alpha^c}^k)  : 1 \le l \le z_{\alpha}, 1 \le k \le z_{\alpha^c}\}$, where $\{x_{\alpha}^l\}_{l=1}^{z_{\alpha}}$ are i.i.d samples of $X_{\alpha}$ and $\{x_{\alpha^c}^k\}_{k=1}^{z_{\alpha^c}}$ are i.i.d samples of $X_{\alpha^c}$. We introduce 
$\bm{B}^{\alpha}_{\setminus i}$, the matrix $ \bm{B}^{\alpha}$ without the column $i$, which admits a singular value decomposition 
\begin{equation}
\bm{B}^{\alpha}_{\setminus i} = \sum_{k \ge 1}\sigma^{\setminus i,k}_{\alpha}\bm{v}^{\setminus i,k}_{\alpha}(\bm{v}^{\setminus i,k}_{\alpha^c})^T,
\end{equation}
where $\sigma^{\setminus i,k}_{\alpha}$ are the singular values sorted in decreasing order, $\bm{v}^{\setminus i,k}_{\alpha}$ and $\bm{v}^{\setminus i,k}_{\alpha^c}$ are respectively the left and right singular vectors of $\bm{B}^{\alpha}_{\setminus i}$.
For all $r  \in \{ 1, \hdots, \min(z_{\alpha}, z_{\alpha^c}) \}$, let $\bm{V}^{\alpha}_{\setminus i,r}$ be the matrix whose columns are $(\bm{v}_{\setminus i,1}^{\alpha}, \hdots, \bm{v}_{\setminus i,r}^{\alpha}) $. The rank $r_{\alpha}(\varepsilon)$ is then estimated as the minimal integer such that
\begin{equation}
\frac{1}{z_{\alpha^c}} \sum_{i=1}^{z_{\alpha^c}} \Vert \bm{B}^{\alpha}_i - \bm{V}^{\alpha}_{\setminus i,r_{\alpha}(\varepsilon)}(\bm{V}^{\alpha}_{\setminus i,r_{\alpha}(\varepsilon)})^T \bm{B}^{\alpha}_i \Vert_2^2  \le \varepsilon^2 \frac{1}{z_{\alpha^c}} \sum_{i=1}^{z_{\alpha^c}} \Vert \bm{B}^{\alpha}_i \Vert_2^2,
\end{equation}
where $\bm{B}^{\alpha}_i$ denotes the $i^{th}$ column of $\bm{B}^{\alpha}$.

\begin{remark}
	Estimating the $\alpha$-ranks yielding a small precision $\varepsilon$ may require many evaluations. It is important to underline that to perform tree optimization we do not need to know the exact value of $r_{\alpha}(\varepsilon)$ but we want to have an estimation enough accurate to detect whether $r_{\alpha}(\varepsilon)$ is high or not. Therefore, $r_{\alpha}$ is estimated with a coarse precision. For the sake of conciseness, the whole strategy is described in Appendix \ref{appendix:alpha_rank_estimation} with the Algorithm \ref{algo:eval_eps_ranks}.\\
\end{remark}

\subsection{Leaves-to-root construction of the tree with stochastic optimizations}\label{subs:local_optimization}
~

In this section, we present the new strategy that progressively constructs a dimension partition tree by suitable pairings of variables, where these pairings are stochastically explored.\\

Let $\Lambda = \{\alpha_1 \hdots, \alpha_l \}$ be a partition of $D = \{1, \hdots, d\}$. When $l= \# \Lambda$ is even, we consider $\mathcal{J}(\Lambda)$ the set of all partitions of $\Lambda$ where each element has a cardinal equal to two. Each partition $\Gamma \in \mathcal{J}(\Lambda)$ thus contains $\frac{l}{2}$ elements. When $\# \Lambda$ is odd, we consider the set $\mathcal{J}(\Lambda) = \bigcup_{\alpha \in \Lambda} \bigcup_{\Lambda \in \mathcal{J}(\Gamma \setminus \alpha) } \{ \{ \alpha \} \ \cup  \Gamma  \}$. Among all partitions of $\mathcal{J}(\Lambda)$, the aim is to find the one, noted $\Gamma$, which minimizes
\begin{equation}\label{eq:cost_function_local}
\mathcal{C}_l(\Gamma) =  \sum_{\beta \in \Gamma} r_{\beta}(\varepsilon) \prod_{\substack{\alpha \in \Lambda\\ \alpha \subset \beta}} r_{\alpha}(\varepsilon).
\end{equation}

In practice, computing the function $\mathcal{C}_l(\Gamma)$ for all $\Gamma \in \mathcal{J}(\Lambda)$ requires a lot of $\alpha$-ranks estimations and it is therefore not affordable. To minimize $\mathcal{C}_l(\Gamma)$, we propose a stochastic algorithm which finds a partition $\Gamma$ associated to a minimal cost function $\mathcal{C}_l(\Gamma)$ among a limited set of partitions. The principle is to compare a current partition $\Gamma$ of $\mathcal{J}(\Lambda)$ with a new one $\Gamma^{\star}$ obtained from $\Gamma$ by permuting two nodes selected according to a probability distribution defined hereafter, and to accept $\Gamma^{\star}$ if $\mathcal{C}_l(\Gamma^{\star}) < \mathcal{C}_l(\Gamma)$.\\

To select a potentially interesting permutation, we propose to choose the first node $\nu_1$ in $\Lambda$ according to the distribution 
\begin{equation}\label{eq:local_proba_first_node}
\mathbb{P}(\nu_1 = \alpha)\  \propto \  r_{P_{\Gamma}(\alpha)}(\varepsilon)^{\gamma_1}, \  \text{ where $P_{\Gamma}(\alpha)$ is the parent of $\alpha$ in $\Gamma$ }.
\end{equation}
A higher $\gamma_1$ increases the probability to select a node $\nu_1$ whose parent in $\Gamma$ has a high rank. Once the node $\nu_1$ is selected, we consider the set $\Lambda \setminus  (\{\nu_1\} \cup \{\nu_1^b\})$, where  $\nu_1^b$ is the second element of the pair formed with $\nu_1$ (in the case $\nu_1$ is a singleton $\nu_1^b = \emptyset$), that is to say $ P_{\Gamma}(\nu_1) = \nu_1 \cup \nu_1^b$. Then, we propose to draw the second node $\nu_2$ in $\Lambda \setminus  (\{\nu_1\} \cup \{\nu_1^b\})$ according to the distribution
\begin{equation}\label{eq:local_proba_second_node}
\mathbb{P}(\nu_2 = \alpha \vert \nu_1)\  \propto \  r_{P_{\Gamma (\alpha)}}(\varepsilon)^{\gamma_2}, \  \text{ where }\alpha \in \Lambda \setminus (\{\nu_1\} \cup \{\nu_1^b\}).
\end{equation}
Again, a higher $\gamma_2$ increases the probability to select a node $\nu_2$ whose parent in $\Gamma$ has a high rank. If the permutation of the two nodes $\nu_1$ and $\nu_2$ decreases the cost function, then the two nodes are permuted. $n_P$ successive random permutations of the nodes are performed according to this distribution. The last partition $\Gamma$ is the one associated to the lowest cost function $\mathcal{C}_l(\Gamma)$ among all the visited partitions. A synthetic description of this strategy for selecting a dimension tree adapted to $u$ can be found in Algorithm \ref{algo:tree_local_optim}. \\	
\begin{algorithm}[h!]
	\caption{Optimization of nodes pairing}\label{algo:tree_local_optim}
	\begin{algorithmic}
		\STATE \hspace{-0.4cm} \textbf{Inputs:} function to approximate $u$, partition $\Lambda$ of $D$, maximal number of iterations $n_P$, $\gamma_1$, $\gamma_2$.
		\STATE \hspace{-0.4cm} \textbf{Outputs:} $\Gamma$
		\STATE Choose randomly $\Gamma \in \mathcal{J}(\Lambda)$.
		\STATE Calculate $\mathcal{C}_l(\Gamma)$ according to Eq. \Cref{eq:cost_function_local}, with estimation of the $\alpha$-ranks using Algorithm \ref{algo:eval_eps_ranks}.
		\FOR{$k=1, \hdots, n_P$}
		\STATE $\Gamma^{\star} \leftarrow \Gamma$
		\STATE Draw $\nu_1$ according to the distribution (\ref{eq:local_proba_first_node}) and then $\nu_2$ according to the distribution (\ref{eq:local_proba_second_node}).
		\STATE Calculate $\mathcal{C}_l(\Gamma^{\star})$ according to \Cref{eq:cost_function_local}, with estimation of the $\alpha$-ranks using Algorithm \ref{algo:eval_eps_ranks}.
		\IF{ $\mathcal{C}_l(\Gamma^{\star}) \le \mathcal{C}_l(\Gamma)$}
		\STATE $\Gamma \leftarrow \Gamma^{\star}$
		\ENDIF
		\ENDFOR
	\end{algorithmic}
\end{algorithm}

Then, the overall strategy that constructs the tree during the tree tensor network approximation of $u$ is given in Algorithm \ref{algo:adpat_constr_tree_local_optim}.\\

\begin{algorithm}[h!]	\caption{Adaptive construction of the tree with local optimization}\label{algo:adpat_constr_tree_local_optim}
	\begin{algorithmic}
		\STATE \hspace{-0.4cm} \textbf{Inputs:} function to approximate $u$, measure $\mu$, approximation spaces $V_{\alpha}, \alpha \in \mathcal{L}(T)$, tolerance $\varepsilon$, parameters relative to the rank estimation $\varepsilon_c$, $n_{\alpha}$, $n_{\alpha^c}$. 
		\STATE \hspace{-0.4cm} \textbf{Outputs:} the dimension tree $T$ and the approximation $u^{\star}$
		\STATE Set $\Lambda = \{\{1\}, \hdots, \{d\}\}$ and $T = \Lambda$
		\WHILE{$\# \Lambda > 1$} 
		\FOR{$\alpha \in \Lambda$}
		\IF{$\alpha \notin \mathcal{L}(T)$}
		\STATE Set $V_{\alpha} = \bigotimes_{\beta \in S(\alpha)}\widehat{U}^{\star}_{\beta}$
		\ENDIF
		\STATE Compute the estimation $\widehat{U}_{\alpha}^{\star}$ of the $\alpha$-principal subspace of $\mathcal{Q}_{V_{\alpha}}u$ with relative reconstruction error $\varepsilon$, using Algorithm \ref{algo:adapt_pcs_precision}.
		\ENDFOR
		\STATE Determine a partition $\Gamma$ of $D$ by pairing elements of $\Lambda$ thanks to Algorithm  \ref{algo:tree_local_optim}.
		\STATE Set $T \leftarrow T \cup \Gamma$ and $\Lambda \leftarrow \Gamma$
		\ENDWHILE
		\STATE Set $V_D =  \bigotimes_{\alpha \in S(D)} \widehat{U}_{\alpha}^{\star}$
		\STATE Compute $u^{\star} = \mathcal{Q}_{V_D}u$
		\STATE Set $T = T \cup D$.
	\end{algorithmic}
\end{algorithm}

\begin{remark}
	The efficiency of our strategy will be compared on numerical examples to the one from \cite{BallaniGrasedyck2014}, where the tree $T$ is adaptively constructed (with local deterministic optimization) during the algorithm \ref{algo:constr_approx_TBT} from Section \ref{sec:learning_ttnetworks_PCA}.\\
	
	The strategy proposed in \cite{BallaniGrasedyck2014} constructs a tree in a leaves-to-root strategy  by successive clusterings of disjoint subsets of $D = \{1, \hdots d\}$. $p$ is the number of elements gathered at the same time (which corresponds to the tree's arity) and it can be chosen to limit the number of possibilities which are explored. The clustering criterion is based on an estimation of the $\alpha$-ranks. The authors explain that when $p>3$ the computational cost for the adaptive part is much more higher than the cost necessary to compute the approximation in the tree tensor network. {As we only consider pairings, in our strategy, we set $p=2$ in all the numerical examples.}
\end{remark}

\section{Numerical examples}\label{sec:numerical_examples}
~
This section aims at showing the efficiency of the following three contributions:
\begin{itemize}
	\item Replacing a non-controlled projection (for example empirical interpolation as in \cite{Maday2007}) by the boosted least-squares projection from \cite{HaNoPe2019}, for which we can provide an explicit bound for the approximation error in expectation. We choose the same parameters for this projection in all the numerical examples: $M=100$, $\delta = 0.9$ and $\eta = 0.01$. The maximal proportion of samples to be removed $p_r$ is chosen equal to $\frac{m_{\alpha}}{n_{\alpha}}$, implying that points are removed while the stability condition is verified. 
	\item Using the adaptive strategy for the determination of the spaces $V_{\alpha}$ in the leaves (described in Algorithm \ref{algo:adapt_basis}), thanks to the adaptive boosted least-squares strategy from \cite{HaNoPe2021}.
	\item Using the adaptive strategy for the estimation of the $\alpha$-principal components presented in Algorithm \ref{algo:adapt_pcs_precision}. In this whole numerical part, the sampling factor $k_{PCA}$ is always taken equal to $3$, which is an arbitrary choice. When the principal components are not adaptively chosen, we simply take $z_{\alpha^c} = m_{\alpha}$ (using notations from Algorithm \ref{algo:adapt_pcs_precision}). 
\end{itemize}

To illustrate the efficiency of the strategies, we assess the quality of the approximation $u^{\star}$ of a function $u \in L^2_{\mu}$ by estimating the error of approximation by
$$ \varepsilon(u^{\star}) = \left(\frac{1}{n_{test}} \sum_{x \in \bm{x}_{test}} (u(x) - u^{\star}(x))^2\right)^{1/2},$$

\noindent{}where the elements of $\bm{x}_{test}$ are $n_{test}$ i.i.d. realizations of $X\sim\mu$.
In practice, we choose $n_{test} =1000$. To study the robustness of the methods, we compute $10$ times the approximations, draw $10$ different test samples $\bm{x}_{test}$ and compute empirical confidence intervals of level $10\%$ and $90\%$ for the errors of approximation.

\subsection{Complexity analysis}\label{par:complexity_analysis}

The total number of evaluations necessary to build the approximation in tree tensor network using Algorithm \ref{algo:constr_approx_TBT} depends on $z_{\alpha}$ and $z_{\alpha^c}$. $z_{\alpha}$ is the number of samples used to build the projection, and $z_{\alpha^c}$ is the number of samples used to estimate the $\alpha$-principal subspaces. For each node $\alpha \in T \setminus \{D\}$, the number of samples $N_{\alpha}$ needed to estimate the $\alpha-$principal subspace is in $\mathcal{O}(z_{\alpha}z_{\alpha^c})$. In addition, if the stability conditions of Theorem \ref{th:final_err_bound} are verified, $z_{\alpha}^c$ scales in $\mathcal{O}(m_{\alpha}\log(m_{\alpha}))$. Assuming that $z_{\alpha} $ scales in $\mathcal{O}(r_{\alpha})$, it comes
\begin{equation*}
\begin{aligned}
n  & = \sum_{\alpha \in T} N_{\alpha} = \mathcal{O}\left( \sum_{\alpha \in T}  m_{\alpha}\log(m_{\alpha})r_{\alpha}\right)\\
& = \mathcal{O}(\sum_{\alpha \in \mathcal{L}(T)} m_{\alpha}r_{\alpha}  + \sum_{\alpha \notin \mathcal{L}(T)} r_{\alpha} \prod_{\beta \in S(\alpha)}r_{\beta}) \text{ up to log factors }\\ 
& = \mathcal{O}(\mathcal{S}(T,r)),  \text{ up to log factors, }
\end{aligned}
\end{equation*}
where $\mathcal{S}(T,r)$ is the storage complexity of the tree tensor network $\mathcal{T}_r^T(V)$, $r = \{r_{\alpha}\}_{\alpha \in T}$, and $m = \{m_{\alpha}\}_{\alpha \in \mathcal{L}(T)}$.

\subsection{Heuristics used in practice}\label{subs:error_control_in_practice}
~
According to Lemma \ref{lem:bound_final_error_qo_proj}, the error of approximation is bounded in expectation by
\begin{equation*}
\mathbb{E}(\Vert u - u^{\star} \Vert^2) \le \sum_{\alpha \in T \setminus \{D\}} (2C_1)^{l(\alpha)}\mathbb{E}(\Vert \mathcal{Q}_{V_{\alpha}}u - \mathcal{P}_{\widehat{U}_{\alpha}^{\star}}\mathcal{Q}_{V_{\alpha}}u \Vert^2) + \sum_{\alpha \in \mathcal{L}(T)} \frac{1}{2} (2C_1)^{l(\alpha)+1} e^{\alpha,dis}_{m_{\alpha}}(u)^2.
\end{equation*}
The term $\mathbb{E}(\Vert \mathcal{Q}_{V_{\alpha}}u - \mathcal{P}_{\widehat{U}_{\alpha}^{\star}}\mathcal{Q}_{V_{\alpha}}u \Vert^2)$ includes the error due to the truncation and estimation of the $\alpha$-principal subspaces. The term $e^{\alpha,dis}_{m_{\alpha}}(u)$ is the discretization error made in the leaves, which comes from the introduction of finite-dimensional subspaces. These two contributions are amplified by constants depending on the boosted least-squares projection and the chosen tree. In the proposed adaptive strategies, if we want to obtain a certain precision $\varepsilon$ for the approximation, it is important to take these constants into account. Assuming that we want to reach a final error with precision $\varepsilon$, according to Lemma \ref{lem:bound_final_error_qo_proj} the following assumptions are a priori needed: 
\begin{itemize}
	\item For all $\alpha \in T \setminus D$, the term $\mathbb{E}(\Vert \mathcal{Q}_{V_{\alpha}}u - \mathcal{P}_{\widehat{U}_{\alpha}^{\star}}\mathcal{Q}_{V_{\alpha}}u \Vert^2)$ is controlled with Algorithm \ref{algo:adapt_pcs_precision} with prescribed tolerance $\varepsilon^2_{pca}$, i.e.
	$$ \mathbb{E}(\Vert \mathcal{Q}_{V_{\alpha}}u - \mathcal{P}_{\widehat{U}_{\alpha}^{\star}}\mathcal{Q}_{V_{\alpha}}u \Vert^2) \le \varepsilon_{pca}^2 : = \frac{\varepsilon^2}{(2C_1)^{l(\alpha)} (\#T-1) }, $$
	where $\#T$ is the number of nodes in the tree.	
\end{itemize}
\begin{itemize}
	\item For all $\alpha \in \mathcal{L}(T)$, the term $e^{\alpha,dis}_{m_{\alpha}}(u)$ is controlled with Algorithm \ref{algo:adapt_basis}, using for all $\alpha \in \mathcal{L}(T)$
	$$e^{\alpha,dis}_{m_{\alpha}}(u)^2 \le \varepsilon^2_{dis} := \frac{\varepsilon^2}{\frac{1}{2}(2C_1)^{l(\alpha)+1}d}.$$
	
\end{itemize}	 
Thus, under all these assumptions, we should get the desired accuracy for $\mathbb{E}(\Vert u - u^{\star} \Vert^2)$. In practice, the discretization errors $ e_{m_{\alpha}}^{\alpha,dis}(u)$ can be controlled by adapting the spaces $V_{\alpha}$, using the adaptive boosted optimal least-squares strategy described in \cite{HaNoPe2019}, for the construction of a sequence of boosted least-squares projections adapted to the sequence of spaces. For polynomial approximation, the sequence of nested subspaces is simply constructed by increasing the polynomial degree one by one. For wavelet approximation, it can be defined by increasing the resolution. The difficulty is that we have to perform this strategy for each sample $Q_{V_{\alpha}}u(\cdot, x_{\alpha^c}^k)$ of the function-valued random variable $Q_{V_{\alpha}}u(\cdot, X_{\alpha^c})$. More details about this strategy can be found in Appendix \ref{sec:adapt_basis}. However, for small values of $\varepsilon$, 
the constants $\varepsilon_{pca}$ and $\varepsilon_{dis}$ are likely to be very small. Indeed when choosing the boosted least-squares projection, the constant $C_1$ defined in Lemma \ref{lem:bound_final_error_qo_proj} may be high, particularly if the number of repetitions $M$ is high or the proportion $p_r$ of removed points is large, and the impact of a high value for $C_1$ will be all the more important as $l(\alpha)$ will be high (this will be particularly the case when using deep trees). Hence, very low values for $\varepsilon_{pca}$ and $\varepsilon_{dis}$ will result in very high rank specifications and the need to introduce high-dimensional spaces in the leaves. \\
Most often, such specifications tend to strongly underestimate the accuracy of the approximation. To better adapt the number of samples needed for a given error specification, the following heuristic choices are rather considered:
\begin{itemize}
	\item We replace the constant $C_1= 2(1+p_r(1-\delta)^{-1}(1-\eta^M)^{-1}M)$ by $C_1= 2(1+(1-\delta)^{-1}(1-\eta)^{-1})$, which corresponds to the boosted optimal weighted least-squares projection from Theorem \ref{th:qo_constant_bls}, with no repetition ($M=1$) and no subsampling ($p_r=1$). In \cite{HaNoPe2019}, we observed on all the examples (without noise) that these two choices give comparable accuracy for the error of approximation. This leads us to take the value $C_1 = (1-\delta^{-1})(1-\eta^{-1})$ even when there are repetitions and subsampling.
	\item When $l(\alpha) \ge 3$, we replace $l(\alpha)$ by $ 3$ in the expressions of $\varepsilon_{dis}$ and $\varepsilon_{pca}$. (In the examples from Sections  \ref{sec:numerical_examples_adaptive_approx_leaves} and \ref{sec:numerical_examples_adaptive_pca} the depth of the tree is lower or equal to $3$ so that this heuristic does not apply but we have observed on some examples that taking $l(\alpha) =1$ is not enough to control the precision). It only applies in examples from Section \ref{sec:numerical_examples_tree_adaptation}.
\end{itemize}

\subsection{Adaptive determination of the approximation spaces in the leaves}\label{sec:numerical_examples_adaptive_approx_leaves}
The discretization error made in the leaves depends on the approximation spaces we choose. In this section we focus on polynomial spaces and we use the adaptive strategy presented in Algorithm \ref{algo:adapt_basis} to select the polynomial degree $p$ that achieves the desired discretization error $\varepsilon_{dis}$.\\

To emphasize the importance of spaces adaptation, we consider the Henon-Heiles potential (see \cite{KressnerSteinlechnerUschmajew2014} for more details about this function) defined on $\mathcal{X} = \mathbb{R}^8$ ($d=8$) equipped with the standard Gaussian measure $\mu$: 
\begin{equation*}
u(x_1,\hdots, x_d) = \frac{1}{2} \sum_{i=1}^d x_i^2 + \sigma^{\star} \sum_{i=1}^{d-1}(x_ix_{i+1}^2-x_i^3) + \frac{\sigma^{\star}}{16} \sum_{i=1}^{d-1}(x_i^2 +x_{i+1}^2)^2,
\end{equation*}
with $\sigma^{\star} = 0.2$. For this function, there is no discretization error for $p \ge 4$, which allows a better interpretation of the results. Polynomial spaces $V_{\nu} = \mathbb{P}_p(\mathcal{X}_{\nu}),\  \nu \in D$, are then considered for the approximation. \\

\begin{table}[h!]\small 
	\centering                   
	\begin{tabular}{c|c|c|c|c|c|c}                                                                                                                       
		\rowcolor{gray!10!white} &	\multicolumn{4}{c|}{\textbf{Without basis adaptation}} & \multicolumn{2}{c}{\textbf{With basis adaptation}}\\
		&	\multicolumn{2}{c|}{$p=15$} & \multicolumn{2}{c|}{$p=4$} & \multicolumn{2}{c}{}\\
		\rowcolor{gray!10!white} &  $\mathcal{S}$ & $n$ &  $\mathcal{S}$ & $n$ & $\mathcal{S}$ & $n$ \\                                                                                                                                                                                                                                
		Interpolation & [761; 761] & [1097; 1097] & [431; 431] & [591; 591] & [461; 461] & [717; 717]\\
		\rowcolor{gray!10!white} Boosted Least-squares &$[761; 761]$ & [1108; 1109] & [431; 431] & [591; 591] &  [461; 461] &  [719; 720]\\                                                                                                                          
	\end{tabular}                      
	\caption{Comparison of the number of samples $n$ without and with basis adaptation necessary to get an approximation error $\varepsilon =10^{-14}$, using respectively interpolation with magic points and boosted least-squares projections. The $\alpha$-principal components are estimated with $z_{\alpha^c} = m_{\alpha}$.}
	\label{table:illustration_basis_adaptation}
\end{table}

Table \ref{table:illustration_basis_adaptation} compares the storage complexity $\mathcal{S}$, the number of evaluations $n$ in three cases. In the first two cases, there is no basis adaptation and we use respectively $p=15$ and $p=4$ such that in both cases there is no discretization error. In the third case, there is an adaptation of the basis (thanks to Algorithm \ref{algo:adapt_basis}) with maximal polynomial degree $p=15$. We observe for this example that the adaptive basis strategy is able to select a polynomial degree $p=5$ for each leaf of the tree, which is close to optimal, the overestimation of $p$ being explained by the choice of the stopping criterion of the algorithm (see Algorithm \ref{algo:adapt_basis}).

\subsection{Adaptive estimation of the $\alpha$-principal subspaces}\label{sec:numerical_examples_adaptive_pca}
To underline the importance of the adaptive estimation of the $\alpha$-principal subspaces, we now consider the following Anisotropic function, in dimension $d = 6:$
\begin{equation}
u(x)  = \frac{1}{(10+2x_1+x_3+2x_4-x_5)^2}
\end{equation}
defined on $\mathcal{X} = [-1,1]^d$ equipped with the uniform measure.\\
We also consider polynomial spaces $V_{\nu} = \mathbb{P}_{p}(\mathcal{X}_{\nu})$ for the approximation, with $p$ chosen adaptively. We construct the approximation in a tree-based tensor format with a balanced binary tree using Algorithm \ref{algo:constr_approx_TBT}.\\


\begin{table}[h!]
	\centering\small
	\begin{subtable}{.7\textwidth}
		\begin{tabular}{c|c|c|c|c}
			\rowcolor{gray!10!white}  $\log(\varepsilon)$ & $\log(\varepsilon(u^{\star}))$ & $\log(\sqrt{\mathbb{E}(\varepsilon(u^{\star})^2)})$ & $\mathcal{S}$ & $n$\\
			
			-2 & [-1.8; -0.8] & -1.4 & [66; 70] & [468; 492]  \\                                                                
			
			\rowcolor{gray!10!white}  -3 & [-2.1; -1.6] & -1.9 & [111; 132] & [586; 650] \\                                                                                                                                                                                                                                                    
			-4 & [-3.0; -2.3] & -2.7 & [160; 201] & [715; 833] \\                                                                                                                                                                                                                                                    
			\rowcolor{gray!10!white}  -5 & [-3.5; -3.1] & -3.3 & [250; 284] & [944; 1080] \\                                                           
			
			-6 & [-4.5; -3.2] & -3.8 & [343; 400] & [1194; 1449]\\ 
			\rowcolor{gray!10!white}	-7 & [-5.2; -4.1] & -4.7 & [590; 700] & [1597; 1999]\\                                                                                                                                                                                    
		\end{tabular}                                                                                                                     
		\caption{Without adaptive estimation of the principal components.}\label{subtable:Anisotropic_interpolation_withoutCC_nonadaptatif}          
	\end{subtable}
	\begin{subtable}{.7\textwidth}
		\begin{tabular}{c|c|c|c|c}
			\rowcolor{gray!10!white}  $\log(\varepsilon)$ & $\log(\varepsilon(u^{\star}))$ & $\log(\sqrt{\mathbb{E}(\varepsilon(u^{\star})^2)})$ & $\mathcal{S}$ & $n$\\
			
			-2  & [-1.7; -0.7] & -1.3 & [53; 74] & [180; 204] \\                                                                
			\rowcolor{gray!10!white}  -3 & [-2.3; -1.6] & -1.9 & [105; 153] & [241; 292] \\                                                                                                                                                                                                                                                    
			-4 & [-3.2; -1.8] & -2.5 & [175; 211] & [313; 361] \\                                                                                                                                                                                                                                                    
			\rowcolor{gray!10!white}  -5 & [-4.1; -3] & -3.6 & [251; 365] & [416; 533] \\                                                           
			-6 & [-4.7; -3.8] & -4.2 & [385; 490] & [545; 655] \\     
			\rowcolor{gray!10!white}	-7 & [-5.7; -4.1] & -4.8 & [680; 875] & [702; 895]\\                                                                                                                                                                                
		\end{tabular}                                                                                                                      
		\caption{With adaptive estimation of the principal components.}\label{subtable:Anisotropic_interpolation_withoutCC_adaptatif}          
	\end{subtable}
	\caption{Anisotropic function. Approximation using interpolation as projections with a prescribed tolerance for each $\alpha \in T$, $\varepsilon_{pca} = \varepsilon$ for the estimation of the principal components. Confidence intervals for relative error $\varepsilon(u^{\star})$, storage complexity $\mathcal{S}$, number of evaluations $n$.}\label{table:Anisotropic_interpolation_withoutCC_adaptatif}          
\end{table}

\begin{table}[h!]
	\centering\small
	\begin{subtable}{.7\textwidth}
		\begin{tabular}{c|c|c|c|c}
			\rowcolor{gray!10!white}  $\log(\varepsilon)$ & $\log(\varepsilon(u^{\star}))$ & $\log(\sqrt{\mathbb{E}(\varepsilon(u^{\star})^2)})$ & $\mathcal{S}$ & $n$\\
			-2 & [-3.3; -2.1] & -2.8 & [164; 185] & [708; 781] \\    \rowcolor{gray!10!white}  -3 & [-3.7; -2.9] & -3.3 & [202; 263] & [814; 1046] \\                                                  
			-4 & [-4.8; -3.4] & -4.1 & [333; 364] & [1137; 1348] \\                                                                                                                                                                                                                                                     
			\rowcolor{gray!10!white}  -5 & [-5.3; -4.1] & -4.7 & [450; 488] & [1707; 1852] \\                                             
			-6 & [-6.4; -4.6] & -5.5 & [566; 681] & [2012; 2657]  \\ 
			\rowcolor{gray!10!white}	-7 & [-6.7; -5.4] & -6 & [855; 965] & [2658; 3243]\\                                                                                                                                                                                                                           
		\end{tabular}                                                                                                                        
		\caption{Without adaptive estimation of the principal components.} 
		\label{subtable:Anisotropic_interpolation_withCC_NoNadaptatif}                                                                                                                                                      
	\end{subtable}
	\begin{subtable}{.7\textwidth}
		\begin{tabular}{c|c|c|c|c}
			\rowcolor{gray!10!white}  $\log(\varepsilon)$ & $\log(\varepsilon(u^{\star}))$ & $\log(\sqrt{\mathbb{E}(\varepsilon(u^{\star})^2)})$ & $\mathcal{S}$ & $n$\\
			-2 & [-3.2; -1.7] & -2.4 & [129; 205] & [269; 357] \\                                                                                                                                                                                                                                                    
			\rowcolor{gray!10!white}  -3 & [-3.9; -2.5] & -3.2 & [240; 391] & [395; 556] \\                                                  
			-4 & [-4.5; -3.3] & -3.9 & [399; 540] & [557; 717] \\                                                                                                                                                                                                                                                     
			\rowcolor{gray!10!white}  -5 & [-5.6; -4.3] & -4.9 & [526; 843] & [705; 1042] \\                                             
			-6 & [-6.3; -4.9] & -5.5 & [758; 1025] & [959; 1223] \\
			\rowcolor{gray!10!white}	-7 & [-7.3; -5.8] & -6.5 & [1070; 1461] & [1124; 1520]\\                                                                                                                                                                                                                           
		\end{tabular}                                                                                                                        
		\caption{With adaptive estimation of the principal components.}\label{subtable:Anisotropic_interpolation_withCC_adaptatif}                                                                                                                                                      
	\end{subtable}
	
	\caption{Anisotropic function. Approximation using interpolation as projections with a balanced binary tree, with a prescribed tolerance for each $\alpha \in T$, $\varepsilon_{pca}^2 = \frac{\varepsilon^2}{(2(1+(1-\delta)^{-1}(1-\eta)^{-1}))^{l(\alpha)}(\#T-1)}$ for the estimation of the principal components. Confidence intervals for relative error $\varepsilon(u^{\star})$, storage complexity $\mathcal{S}$, number of evaluations $n$.}\label{table:Anisotropic_interpolation_withCC_adaptatif}                                                                                                                                                      
\end{table}

\begin{table}[h!]
	\centering\small
	\begin{subtable}{.7\textwidth}
		\begin{tabular}{c|c|c|c|c}
			\rowcolor{gray!10!white}  $\log(\varepsilon)$ & $\log(\varepsilon(u^{\star}))$ & $\log(\sqrt{\mathbb{E}(\varepsilon(u^{\star})^2)})$ & $\mathcal{S}$ & $n$\\
			
			-2 & [-3.7; -2.3] & -3.2 & [213; 232] & [759; 819] \\                                                                                                                                                                                                                                                          
			\rowcolor{gray!10!white}  -3 & [-3.8; -2.8] & -3.3 & [253; 292] & [837; 944] \\                                                                                                                                                                                                                                  -4 & [-5.0; -3.4] & -4.2 & [321; 408] & [981; 1275] \\                                                             
			\rowcolor{gray!10!white}  -5 & [-5.1; -4.3] & -4.6 & [426; 507] & [1353; 1692] \\                                                                                                                                                                                                                              -6 & [-5.8; -4.9] & -5.4 & [551; 656] & [1823; 2329]\\                                                   
			\rowcolor{gray!10!white}  -7 & [-6.7; -5.3] & -6.0 & [735; 875] & [2851; 3791]\\                                                                                                                                                                                                                             
		\end{tabular}                                                                                                                 
		\caption{Without adaptive estimation of the principal components.} \label{subtable:Anisotropic_bls_Nonadaptatif}                                                                                                                                            
	\end{subtable}
	\begin{subtable}{.7\textwidth}
		\begin{tabular}{c|c|c|c|c}
			\rowcolor{gray!10!white}  $\log(\varepsilon)$ & $\log(\varepsilon(u^{\star}))$ & $\log(\sqrt{\mathbb{E}(\varepsilon(u^{\star})^2)})$ & $\mathcal{S}$ & $n$\\
			
			-2 & [-3.6; -2.3] & -3 & [193; 270] & [328; 403] \\                                                                                                                                                                                                                                                          
			\rowcolor{gray!10!white}  -3 & [-5.0; -3.3] & -4.1 & [309; 430] & [455; 579] \\                                                                                                                                                                                                                                  -4 & [-4.9; -3.8] & -4.4 & [385; 531] & [534; 697] \\                                                             
			\rowcolor{gray!10!white}  -5 & [-6.2; -4.4] & -5.3 & [588; 805] & [751; 985] \\                                                                                                                                                                                                                              -6 & [-6.7; -5.5] & -6.1 & [827; 1268] & [1028; 1503] \\                                                   
			\rowcolor{gray!10!white}  -7 & [-7.7; -6.2] & -7.0 & [1203; 1861] & [1463; 2230] \\                                                                                                                                                                                                                             
		\end{tabular}                                                                                                                 
		\caption{With adaptive estimation of the principal components.} \label{subtable:Anisotropic_bls_adaptatif}                                                                                                                                            
	\end{subtable}
	\caption{Anisotropic function. Approximation using boosted least-squares as projections with a balanced binary tree, with a prescribed tolerance for each $\alpha \in T$ $\varepsilon_{pca}^2 = \frac{\varepsilon^2}{(2(1+(1-\delta)^{-1}(1-\eta)^{-1}))^{l(\alpha)}(\#T-1)}$ for the estimation of the principal components. Confidence intervals for relative error $\varepsilon(u^{\star})$, storage complexity $\mathcal{S}$, number of evaluations $n$.}    
	\label{table:Anisotropic_bls_adaptatif}                                                                                                                                            
\end{table} 

The results are summarized in Tables \ref{table:Anisotropic_interpolation_withoutCC_adaptatif}, \ref{table:Anisotropic_interpolation_withCC_adaptatif} and \ref{subtable:Anisotropic_bls_adaptatif}.
In Table \ref{table:Anisotropic_interpolation_withoutCC_adaptatif}, we first note that choosing $\varepsilon_{pca} = \varepsilon$ and using interpolation does not provide an approximation error that is lower than $\varepsilon$. In this case, we also see that the adaptive strategy for the estimation of the principal components performs better in average (in the sense that the obtained approximation error is lower) than in the non adaptive case, and this with fewer evaluations. Table \ref{table:Anisotropic_interpolation_withCC_adaptatif} then shows that for each $\alpha \in T$, choosing $\varepsilon_{pca}^2 = \frac{\varepsilon^2}{(2(1+(1-\delta)^{-1}(1-\eta)^{-1}))^{l(\alpha)}(\#T-1)}$ and using interpolation does not provide a controlled approximation error for the small values of $\varepsilon$ (i.e $\log(\varepsilon)$ lower than $-4$), both when the principal components are adaptively determined or not. However, using the adaptive strategy strongly reduces the number of samples. Finally, Table \ref{table:Anisotropic_bls_adaptatif} shows that for each $\alpha \in T$, choosing $\varepsilon_{pca}^2 = \frac{\varepsilon^2}{(2(1+(1-\delta)^{-1}(1-\eta)^{-1}))^{l(\alpha)}(\#T-1)}$ and using the boosted optimal weighted least-squares projection (even with $M=100$ repetitions and subsampling) provides this time a controlled approximation error. What is also very interesting is that the adaptive strategy for principal components estimation strongly reduces the number of samples necessary to reach the desired accuracy.

\subsection{Adaptive approximation of the tree}
\label{sec:numerical_examples_tree_adaptation}
In this section, we want to illustrate the efficiency of the proposed strategy for tree optimization. To this end, we focus on two numerical examples, for which the optimal dimension tree is known: 
\begin{itemize}
	\item A sum of bivariate functions with separated variables defined by
	\begin{equation}
	\label{eq:sum_of_bivariate_functions}
	u(x) = g(x_1,x_2) + g(x_3,x_4)+ \hdots +  g(x_{d-1},x_{d}), 
	\end{equation}
	where we consider $\mathcal{X} = [-1,1]^d$, equipped with the uniform measure and $g(x_{\nu},x_{\nu+1}) = \sum_{i=0}^3 x_{\nu}^i x_{\nu + 1}^i$.
	\item A sum of trivariate functions with interlaced variables defined by
	\begin{equation}
	\label{eq:another_sum_of_bivariate_functions}
	u(x) = g(x_1,x_2,x_3) + g(x_2,x_3,x_4)+ \hdots +  g(x_{d-3},x_{d-2},x_{d-1}) +  g(x_{d-2},x_{d-1},x_{d}), 
	\end{equation}
	where we consider $\mathcal{X} = [-1,1]^d$, equipped with the uniform measure and $g(x_{\nu},x_{\nu+1},x_{\nu+2}) = \sum_{i=0}^2 x_{\nu}^i x_{\nu + 1}^i x_{\nu + 2}^i$. We consider $V = \bigotimes_{\nu =1}^d \mathbb{P}_3(\mathcal{X}^{\nu})$, so that there is no discretization error.
\end{itemize}
We denote by \textbf{s-LO} the proposed leaves-to-root strategy with stochastic local optimizations and by \textbf{bg-LO} the strategy proposed in \cite{BallaniGrasedyck2014} where the optimization criterion is a rank ratio. These strategies are also compared with a random tree \textbf{RT} (with arity two) and a random balanced tree referred to as \textbf{RBT}. In these two last cases, the overall number of evaluations of the function is dedicated to the computation of the approximation, such that $n_{optim} =0$ and $n = n_{total}$.\\

\begin{table}[h!]     \small                                                                                 
	\centering                                                                                            
	\begin{tabular}{c|c|c|c|c}                                                                        
		\rowcolor{gray!10!white}	& $\log(\varepsilon(u^{\star}))$ &  $\mathcal{S}$ & $n$ & $n_{total}$  \\                                                                                                                                          
		& [$q_{10}$; $q_{50}$; $q_{90}$] & [$q_{10}$; $q_{50}$; $q_{90}$] & [$q_{10}$; $q_{50}$; $q_{90}$] & [$q_{10}$; $q_{50}$; $q_{90}$]\\                                                                                                                                      
		\rowcolor{gray!10!white}\textbf{s-LO}  &  [-14.9; -14.7; -14.0] & [340; 529; 1360] & [468; 689; 1552] & [1243; 1699; 2752]  \\        
		\textbf{bg-LO} &  [-15; -14.7; -14.5] & [354; 376; 445] & [485; 512; 574] & [3239; 3372; 3611]  \\                                       
		\rowcolor{gray!10!white}	\textbf{RBT}  &  [-14.4; -14.3; -13.9] & [696; 925; 2198] & [858; 1150; 2432] & [858; 1150; 2432]  \\     
		\textbf{RT} & [-14.6; -14.3; -14.1] & [971; 1763; 2471] & [1166; 1987; 2745] & [1166; 1987; 2745] \\     
		
	\end{tabular}                                                                                
	\caption{Sum of bivariate functions defined by \Cref{eq:sum_of_bivariate_functions} with $d=8$. Approximation with a prescribed tolerance $\varepsilon = 10^{-14}, \varepsilon_c = 10^{-2}$. $q_{10}, q_{50}, q_{90}$ are the $10^{th}, 50^{th}$ and $90^{th}$ quantiles for relative error $\log(\varepsilon(u^{\star}))$, number of evaluations $n$, for the \textbf{s-LO} strategy $\gamma = 6$ and the maximal number of iterations $n_P = 2d$.} 
	\label{table:optim_sum_of_bivariate_d8}
\end{table}

\begin{table}[h!]    \small                                                                                 
	\centering                                                                                            
	\begin{tabular}{c|c|c|c|c}                                                                        
		\rowcolor{gray!10!white}	& $\log(\varepsilon(u^{\star}))$ &  $\mathcal{S}$ & $n$ & $n_{total}$  \\                                                                                                                                          
		& [$q_{10}$; $q_{50}$; $q_{90}$] & [$q_{10}$; $q_{50}$; $q_{90}$] & [$q_{10}$; $q_{50}$; $q_{90}$] & [$q_{10}$; $q_{50}$; $q_{90}$]\\                                                                                                                                                  
		\rowcolor{gray!10!white}	\textbf{s-LO}  &  [-14.4; -14.1; -13.7] & [949; 2011; 4167] & [1259; 2394; 4717] & [3874; 4979; 7802]  \\   
		\textbf{bg-LO} & [-14.7; -14.5; -14.2] & [692; 729; 882] & [956; 993; 1165] & [11336; 11869; 12773] \\
		\rowcolor{gray!10!white}	\textbf{RBT}  & [-14.0; -13.8; -13.2] & [7476; 10888; 14837] & [8148; 11657; 15651] & [8148; 11657; 15651]  \\                 
		\textbf{RT} & [-14.0; -13.7; -12.7] & [5033; 11320; 36456] & [5600; 12154; 37635] & [5600; 12154; 37635] \\                                                                                                          
	\end{tabular}                                                                                
	\caption{Sum of bivariate functions defined by \Cref{eq:sum_of_bivariate_functions} with $d=16$. Approximation with a prescribed tolerance $\varepsilon = 10^{-14}, \varepsilon_c = 10^{-2}$. $q_{10}, q_{50}, q_{90}$ are the $10^{th}, 50^{th}$ and $90^{th}$ quantiles for relative error $\log(\varepsilon(u^{\star}))$, number of evaluations $n$, for the \textbf{s-LO} strategy $\gamma = 6$ and the maximal number of iterations $n_P = 2d$.} 
	\label{table:optim_sum_of_bivariate_d16}
\end{table}

\begin{table}[h!]    \small                                                                                 
	\centering                                                                                            
	\begin{tabular}{c|c|c|c|c}                                                                        
		\rowcolor{gray!10!white}	& $\log(\varepsilon(u^{\star}))$ &  $\mathcal{S}$ & $n$ & $n_{total}$  \\                                                                                                                                          
		& [$q_{10}$; $q_{50}$; $q_{90}$] & [$q_{10}$; $q_{50}$; $q_{90}$] & [$q_{10}$; $q_{50}$; $q_{90}$] & [$q_{10}$; $q_{50}$; $q_{90}$]\\   
		
		\rowcolor{gray!10!white}\textbf{s-LO} & [-14.2; -13.9; -13.6] & [2395; 5773; 8183] & [3035; 6685; 9177] & [7225; 12185; 15842] \\                                                                                                   
		\textbf{bg-LO} & [-14.3; -13.6; -13] & [1940; 5565; 11961] & [2505; 6374; 13041] & [25855; 31594; 38246] \\         
		
		\rowcolor{gray!10!white}	\textbf{RBT} & [-13.6; -13; -12.5] & [16001; 22079; 46982] & [17305; 23634; 48815] & [17305; 23634; 48815] \\         
		
		\textbf{RT} &[-13.7; -12.9; -12.2] & [15100; 22745; 32182] & [16418; 24269; 33793] & [16418; 24269; 33793] \\     
	\end{tabular}                                                                                
	\caption{Sum of bivariate functions defined by \Cref{eq:sum_of_bivariate_functions} with $d=24$. Approximation with a prescribed tolerance $\varepsilon = 10^{-13}, \varepsilon_c = 10^{-2}$. $q_{10}, q_{50}, q_{90}$ are the $10^{th}, 50^{th}$ and $90^{th}$ quantiles for relative error $\log(\varepsilon(u^{\star}))$, number of evaluations $n$, for the \textbf{s-LO} strategy $\gamma = 6$ and the maximal number of iterations $n_P = 2d$.} 
	\label{table:optim_sum_of_bivariate_d24}
\end{table}

The results associated with the bivariate function are summarized in Tables \ref{table:optim_sum_of_bivariate_d8}, \ref{table:optim_sum_of_bivariate_d16} and \ref{table:optim_sum_of_bivariate_d24}.
Focusing on $d=8$, Table \ref{table:optim_sum_of_bivariate_d8} shows that both optimization strategies decrease the number of evaluations $n$ necessary to compute the approximation with precision $\varepsilon$ compared to a random tree \textbf{RT} or a random balanced tree \textbf{RBT}. However the total number of evaluations (including the evaluations used for the tree search) is in average slightly greater than the cost of a random tree \textbf{RT}. This is due to the fact that the input space dimension is small and the $\alpha$-ranks (even chosen randomly) remain moderate. The results are quite different if we are interested in higher dimensional functions. For instance, if we focus on $d=16$,
Table \ref{table:optim_sum_of_bivariate_d16} shows that both optimization strategies decrease the number of evaluations $n$ necessary to compute the approximation with precision $\varepsilon$ compared to a random tree \textbf{RT} or a random balanced tree \textbf{RBT}. But this time, for all the optimization strategies, the $90^{th}$ quantile of the total number of evaluations $n_{total}$ is lower than the cost of a random tree \textbf{RT} or a random balanced tree \textbf{RBT}. In this case, the \textbf{s-LO} strategy is the most efficient method as it decreases the three quantiles compared to the random trees. The interest of using optimization strategies when the dimension increases is all the more underlined when choosing $d=24$, as it is done in Table \ref{table:optim_sum_of_bivariate_d24}. In that case, the number of evaluations $n$ necessary to compute the approximation is once again strongly reduced. And for both optimization methods the $90^{th}$ quantile of the number of total evaluations is much lower than with a random tree \textbf{RT} or a random balanced tree \textbf{RBT}. We also notice that for this example, the \textbf{bg-LO} method recovers the best tree, but the additional cost used to evaluate the $\alpha$-ranks for the optimization, which appears in $n_{total}$, is this time not competitive for the $10^{th}$ and $50^{th}$ quantiles. On the contrary, the local stochastic strategy we propose performs particularly well as the $90^{th}$ quantile of the number of evaluations is lower than the $10^{th}$ quantile of the number of evaluations necessary for a random tree \textbf{RT} and \textbf{RBT}.\\

\begin{table}[h!]     \small                                                                                 
	\centering                                                                                            
	\begin{tabular}{c|c|c|c|c}                                                                        
		\rowcolor{gray!10!white}	& $\log(\varepsilon(u^{\star}))$ & $\mathcal{S}$ & $n$ & $n_{total}$  \\                                                                                                                                          
		& [$q_{10}$; $q_{50}$; $q_{90}$] & [$q_{10}$; $q_{50}$; $q_{90}$] & [$q_{10}$; $q_{50}$; $q_{90}$] & [$q_{10}$; $q_{50}$; $q_{90}$]\\                                                                                                                                                   
		\rowcolor{gray!10!white} \textbf{s-LO}  & [-14.3; -13.9; -13.7] & [7733; 10812; 12189] & [8376; 11574; 12977] & [12211; 15487; 16987] \\                       
		\textbf{bg-LO} & [-14.3; -14.1; -14] & [2204; 8267; 13762] & [2665; 9076; 14755] & [26450; 34031; 39490] \\                            
		\rowcolor{gray!10!white} 	\textbf{RBT}  & [-14; -13.8; -13.4] & [10647; 12420; 19681] & [11392; 13235; 20673] & [11392; 13235; 20673] \\             
		\textbf{RT} &  [-14.2; -13.8; -13.3] & [9063; 12262; 23263] & [9817; 13164; 24406] & [9817; 13164; 24406] \\   
	\end{tabular}                                                                                
	\caption{Sum of trivariate functions defined by \Cref{eq:another_sum_of_bivariate_functions} with $d=19$. Approximation with a prescribed tolerance $\varepsilon = 10^{-14}, \varepsilon_c = 10^{-2}$. $q_{10}, q_{50}, q_{90}$ are the $10^{th}, 50^{th}$ and $90^{th}$ quantiles for relative error $\log(\varepsilon(u^{\star}))$, number of evaluations $n$, for the \textbf{s-LO} strategy $\gamma = 6$ and the maximal number of iterations $n_P = 2d$.} 
	\label{table:optim_HH_d8}
\end{table}

The results associated with the sum of trivariate functions are summarized in Table \ref{table:optim_HH_d8}. In line with the results associated with the sum of bivariate functions, we notice again in this table that the different optimization strategies allow us to decrease the number of evaluations $n$ necessary to compute the approximation with precision $\varepsilon$ compared to a random tree \textbf{RT} or a random balanced tree \textbf{RBT}. But the total number of evaluations for the deterministic optimization strategies is higher than the cost of a random tree. This is due to the fact that whatever the tree is, the $\alpha$-ranks (even chosen randomly) remain moderate, such that the additional cost due to the $\alpha$-ranks estimations may not be useful. However with the stochastic strategy, the total number of evaluations is decreased compared to a random tree. 

\section{Conclusions}
In this paper, we have proposed an algorithm to construct the approximation of a function $u$ in tree tensor format $\mathcal{T}_r^T(V)$ with $V = \bigotimes_{\nu=1}^d V_{\nu}$ some background approximation space possibly selected adaptively. Using adaptive strategies for the control of the discretization error, the control of the $\alpha$-ranks and the estimation of the principal components we are able to provide a controlled approximation of the function $u$, assuming we have a sufficiently high number of evaluations. The theoritical criteria used to control the approximation appear to be very pessimistic for two reasons:
\begin{itemize}
	\item As underlined in \cite{HaNoPe2019}, the constant of quasi-optimality $C_1$ of the boosted least-squares projection is loose compared to what we observe in practice.
	\item The proof of Theorem \ref{th:final_err_bound} leads to a bound with the constant $C_1$ to the power of the depth of the tree. 
\end{itemize}
On the studied examples, these theoretical bounds turn out to be pessimistic. However, as this bound has been etablished for any function from $L^2_{\mu}$, some functions may indeed verify this bound (we have just not found them yet).\\

~
In this work, we proposed several optimization strategies for choosing a dimension partition tree $T$, which is adapted to the function we want to approximate, in the sense that the ranks of the approximation remain small to get a certain accuracy. The deterministic strategy from the literature explore a large number of trees and are able to recover really good trees to reach low ranks. However this exploration is often too expensive compared to the number of evaluations necessary for the approximation of the function. In the presented cases, using these strategies may sometimes lead to an overall number of samples which is smaller to the one required by random trees with high $\alpha$-ranks. But this is not always the case, and the number of evaluations associated to the selection of the dimension tree may be very high.\\

The presented stochastic strategy (with a few exploration steps) is more competitive regarding the number of evaluations for the estimations of the ranks. However this strategies involves several numerical (and heuristics) parameters, which need to be tuned. Furthermore, if the choices made for these examples are working relatively well we do not claim that this will be efficient for any  function. 

\bibliographystyle{plain}
\bibliography{references}
\begin{appendix}

	\section*{Appendix}
	
	\section{Estimation of the $\alpha$-principal subspaces}\label{appendix:estimation_pcs_approx}
	We here present the practical aspects for estimating the $\alpha$-principal subspaces of $Q_{V_{\alpha}}u$. 
	Let $(\varphi_j^{\alpha})_{j=1}^{m_{\alpha}}$ be an orthonormal basis of $V_{\alpha}$. Then $Q_{V_{\alpha}}u(\cdot, x_{\alpha}^l)$ can be written $Q_{V_{\alpha}}u(\cdot, x_{\alpha^c}^l) = \sum_{j=1}^{m_{\alpha}} a^{jl}_{\alpha} \varphi^{\alpha}_j(\cdot)$
	where the coefficients $a^{jl}_{\alpha}$ depend on the samples $\{x_{\alpha}^i\}_{i=1}^{z_{\alpha}}$ in $\mathcal{X}_{\alpha}$ used to define the projection $Q_{V_{\alpha}}$. Therefore, solving the Equation  (\ref{eq:empirical_pcs}) requires evaluating the function $u$ on a product grid $\{(x^i_{\alpha},x^l_{\alpha^c}) : 1 \le i \le z_{\alpha}, 1 \le l \le z_{\alpha^c}\}$, where the samples $(x^i_{\alpha},x^l_{\alpha^c})$ are not i.i.d..
	We denote by $\bm{A}^{\alpha} \in \mathbb{R}^{m_{\alpha} \times z_{\alpha^c}}$ the matrix formed with the coefficients $(a^{il}_{\alpha})$.
	The truncated singular value decomposition of $\bm{A}^{\alpha}$ is $
	\bm{A}^{\alpha}_{r_{\alpha}} = \sum_{k=1}^{r_{\alpha}} \sigma^k_{\alpha} \bm{v}^k_{\alpha}(\bm{v}^k_{\alpha^c})^T$
	where $\bm{v}^k_{\alpha} = (v^{ki}_{\alpha})_{1 \le i \le m_{\alpha}} \in \mathbb{R}^{m_{\alpha}}$ and $\bm{v}^k_{\alpha^c} = (v^{kl}_{\alpha^c})_{1 \le l \le z_{\alpha^c}} \in \mathbb{R}^{z_{\alpha^c}}$, and the $\sigma^1_{\alpha} \ge \sigma^{k}_{\alpha} \ge \hdots \sigma^{r_{\alpha}}_{\alpha}$ are the singular values, which are assumed to be sorted in decreasing order.\\
	The solution of Equation (\ref{eq:empirical_pcs}) is the subspace spanned by the functions 
	$v^{\alpha}_k(\cdot) = \sum_{j=1}^{m_{\alpha}} v^{kj}_{\alpha} \varphi_j^{\alpha}(\cdot)$,  for $\ 1 \le k \le r_{\alpha}.$
	Letting $\bm{V}^{\alpha}_{r_{\alpha}} = (\bm{v}_{\alpha}^1, \hdots, \bm{v}_{\alpha}^{r_{\alpha}})$, we have 
	\begin{equation*}
	\sum_{l=1}^{z_{\alpha^c}} \Vert Q_{V_{\alpha}}u(\cdot, x^l_{\alpha^c}) - P_{\widehat{U}_{\alpha}^{\star}}Q_{V_{\alpha}}u(\cdot,x^l_{\alpha^c})\Vert_{L^2_{\mu_{\alpha}}}^2 = \Vert \bm{A}^{\alpha}_{r_{\alpha}} - \bm{V}^{\alpha}_{r_{\alpha}}(\bm{V}^{\alpha}_{r_{\alpha}})^T\bm{A}^{\alpha}_{r_{\alpha}} \Vert_F^2 = \sum_{k > r_{\alpha}} (\sigma^k_{\alpha})^2.
	\end{equation*}
	The rank $r_{\alpha}$ can be chosen such that $\sum_{k > r_{\alpha}} (\sigma^k_{\alpha})^2 \le \varepsilon^2 \sum_{k \ge  1} (\sigma^k_{\alpha})^2$ implying that
	\begin{equation*}
	\frac{1}{z_{\alpha^c}} \sum_{l=1}^{z_{\alpha^c}} \Vert Q_{V_{\alpha}}u(\cdot, x^l_{\alpha^c}) - P_{\widehat{U}_{\alpha}^{\star}}Q_{V_{\alpha}}u(\cdot,x^l_{\alpha^c})\Vert_{L^2_{\mu_{\alpha}}}^2 \le \frac{\varepsilon^2}{z_{\alpha^c}} \sum_{l=1}^{z_{\alpha^c}} \Vert Q_{V_{\alpha}}u(\cdot, x^l_{\alpha^c}) \Vert_{L^2_{\mu_{\alpha}}}^2.
	\end{equation*}
	We describe in Algorithm \ref{algo:adapt_pcs_precision} a procedure that adapts both the rank $r_\alpha$ and the number of samples $z_{\alpha^c}$ to estimate a subspace $\widehat{U}_{\alpha}^{\star}$ that yields a reconstruction error with a prescribed precision.

	\begin{algorithm}[h!]
		\caption{Adaptive algorithm for the estimation of the $\alpha$-principal components of a function-valued random variable $X_{\alpha^c} \mapsto Q_{V_{\alpha}}u(\cdot, X_{\alpha^c})$ with prescribed tolerance $\varepsilon$ }\label{algo:adapt_pcs_precision}
		\begin{algorithmic}
			\STATE \hspace{-0.4cm}\textbf{Inputs:} desired tolerance $\varepsilon$, random variable $u(\cdot, X_{\alpha^c})$, approximation space $V_{\alpha}$, $(\varphi_j)_{j=1}^{m_{\alpha}}$ an orthonormal basis of $V_{\alpha}$, oblique projection $Q_{V_{\alpha}}$ and sampling factor $k_{PCA}$.
			\STATE \hspace{-0.4cm}\textbf{Outputs:} $\bm{V}^{\alpha}_r$ matrix of singular vectors of $\bm{A}^{\alpha}$
			\STATE Set $z_{\alpha^c} =1$
			\STATE Compute the vector $\bm{A}^{\alpha}$ corresponding to the coefficients of one realization $Q_{V_{\alpha}}u(\cdot,X_{\alpha^c})$ in the orthonormal basis of $V_{\alpha}$.
			\STATE Set $\mathcal{E} = \infty$
			\WHILE{$\mathcal{E} > \varepsilon$ and $z_{\alpha^c} \le k_{PCA}\dim(V_{\alpha})$}
			\STATE Update $z_{\alpha^c} \leftarrow z_{\alpha^c} + 1$ 
			\STATE Update $\bm{A}^{\alpha} = [\bm{A}^{\alpha}, \bm{a}^{\alpha}]$, with $\bm{a}^{\alpha}$ the vector corresponding to the coefficients of one realization of $Q_{V_{\alpha}}u(\cdot,X_{\alpha^c})$ in the orthonormal basis of $V_{\alpha}$.
			\STATE Set $r =0$
			\WHILE{$\mathcal{E} > \varepsilon$ or $r\le z_{\alpha^c}$}
			\STATE Update $r \leftarrow r+1$
			\STATE Compute $\mathcal{E}$ the leave-one-out cross validation error,
			\FOR{$l=1, \hdots, z_{\alpha^c}$}
			\STATE Determine the matrix $\bm{V}^{\alpha}_{\setminus l,r}$ of $r$ main left singular vectors of $\bm{A}^{\alpha}_{\setminus l}$, which is $\bm{A}^{\alpha}$ without its $l^{th}$ column.
			\ENDFOR
			\STATE Set
			\begin{equation}
			\mathcal{E} = \frac{  \sum_{l=1}^{z_{\alpha^c}} \Vert \bm{A}^{\alpha}_l - \bm{V}^{\alpha}_{\setminus l,r}(\bm{V}^{\alpha}_{\setminus l,r})^T \bm{A}^{\alpha}_l \Vert_2^2 }{  \sum_{l=1}^{z_{\alpha^c}} \Vert \bm{A}^{\alpha}_l \Vert_2^2 }.
			\end{equation}
			\ENDWHILE
			\ENDWHILE
			\STATE Determine the matrix $\bm{V}^{\alpha}_r$ of $r$ left singular vectors of $\bm{A}^{\alpha}$.
		\end{algorithmic}
	\end{algorithm}
	
	\section{Adaptive determination of feature spaces $V_\alpha$}\label{sec:adapt_basis}
	We present in Algorithm \ref{algo:adapt_basis} an adaptive procedure for the selection of approximation (or feature) spaces $V_\alpha$, $\alpha \in \mathcal{L}(T)$, from a sequence of candidate spaces.
	
	\begin{algorithm}[h!]
		\caption{Algorithm for adaptive approximation of $u(\cdot, x_{\alpha^c}^k)$.}\label{algo:adapt_basis}
		\begin{algorithmic}
			\STATE \hspace{-0.4cm}\textbf{Inputs:} desired tolerance $\varepsilon_{dis}$, a sequence of nested approximation spaces $(V_{\alpha}^j)_{j \ge 1}$.
			\STATE \hspace{-0.4cm}\textbf{Outputs:} $\bm{a}^{\alpha}$ the vector corresponding to the coefficients of the $k^{th}$ realization of $Q_{V_{\alpha}^j}u(\cdot,x_{\alpha^c}^k)$ in the orthonormal basis of $V_{\alpha}^{j}$, with $j$ depending on the desired tolerance $\varepsilon$.
			\STATE Set $\mathcal{E} = \infty$
			\WHILE{$\mathcal{E} > \varepsilon_{dis}$}
			\STATE Compute the coefficients $\bm{a}^{\alpha}$ of the boosted least-squares projection $Q_{V_{\alpha}^j}u(\cdot,x_{\alpha^c}^k)$ in the orthonormal basis of $V_{\alpha}^j$
			\STATE Set $\mathcal{E} = \frac{a^{\alpha}_p}{\sqrt{\sum_{i=1}^p (a^{\alpha}_i)^2}}$
			\STATE Set $j = j+1$
			\ENDWHILE
		\end{algorithmic}
	\end{algorithm}
	
	\section{Explicit expressions of the optimal measure}
	\label{app:explicitExpression}
	
	The boosted optimal weighted least-squares projection, on which our learning algorithm is based, relies on the generation of samples associated with the optimal measure defined by Eq. (\ref{eq:optimal_measure}). In this section, we make explicit the expression of this optimal measure, in the case where $\alpha$ is a leaf of the tree or an interior node.
	\begin{itemize}
		\item When $\alpha \in \mathcal{L}(T)$ is a leaf node, $V_{\alpha}$ is a given approximation space of univariate functions, such that sampling only implies one-dimensional distributions. One can then rely on standard simulation methods such as rejection sampling, inverse transform sampling or slice sampling techniques, see \cite{Devroye1985}.
		\item When $\alpha \notin \mathcal{L}(T)$, $V_{\alpha} = \otimes_{\beta \in S} \widehat{U}_{\beta}^{\star}$. For each $\beta$, we let $\{\psi_{k_{\beta}}^{\beta} \}_{k_{\beta} = 1}^{r_{\beta}}$ be a basis of $\widehat{U}_{\beta}^{\star}$. The product basis of $V_{\alpha}$ is denoted by $\{ \varphi_{i_{\alpha}}^{\alpha}\}_{i_{\alpha}=1}^{m_{\alpha}}$, where $m_{\alpha} = \prod_{\beta \in S(\alpha)}r_{\beta}$ and for $i_{\alpha} =1, \hdots, m_{\alpha}$,
		$\varphi_{i_{\alpha}}^{\alpha}(x_{\alpha}) = \prod_{\beta \in S(\alpha)} \psi_{k_{\beta}}^{\beta} (x_{\beta}), \text{ for } i_{\alpha} \equiv (k_{\beta})_{\beta \in S(\alpha)}.$
		The sampling measure given by Equation \Cref{eq:optimal_measure} is such that
		\begin{equation*}
		w^{\alpha}(x_{\alpha})^{-1} = \frac{1}{m_{\alpha}} \sum_{i_{\alpha}=1}^{m_{\alpha}} \varphi_{i_{\alpha}}^{\alpha}(x_{\alpha})^2  =  \prod_{\beta \in S(\alpha)} \frac{1}{r_{\beta}} \sum_{1 \le k_{\beta} \le r_{\beta}}  \psi_{k_{\beta}}^{\beta} (x_{\beta})^2
		\end{equation*}
		and using the product structure of $\mu_{\alpha}$, we have
		\begin{equation*}
		d\rho_{\alpha}(x_{\alpha}) = \prod_{\beta \in S(\alpha)} d\rho_{\beta}(x_{\beta}) \text{ with } d\rho_{\beta}(x_{\beta}) = \frac{1}{r_{\beta}}\sum_{k_{\beta} =1}^{r_{\beta}} \psi_{k_{\beta}}^{\beta} (x_{\beta})^2 d\mu_{\beta}(x_{\beta}).
		\end{equation*}
		As for each $\beta \in S(\alpha)$, $d\rho_{\beta}(x_{\beta})$ can be written in tree tensor networks format, its marginal distributions can be efficiently computed. Then sampling from $\rho_{\beta}$ can be efficiently done through sequential sampling. The interested reader is referred to \cite{Haberstich2020} for some implementation details.	\end{itemize}

	\section{Estimation of the $\alpha$-ranks of a function $u$ to perform tree adaptation.}
	\label{appendix:alpha_rank_estimation}
	We present here the algorithm that estimates $\alpha$-ranks for tree adaptation. The strategy is described in Section \ref{subs:eps_rank_estimation}.
	\begin{algorithm}[h!]
		\caption{ Determination of the ranks $r_{\alpha}({\varepsilon})$ of a function $u$ }\label{algo:eval_eps_ranks}
		\begin{algorithmic}
			\STATE \hspace{-0.4cm} \textbf{Inputs:} coarse tolerance $\varepsilon$, function $u$, tuple $\alpha$, product measure $\mu$, $n_{\alpha}$ and $n_{\alpha^c}$
			\STATE \hspace{-0.4cm} \textbf{Outputs:} $r_{\alpha}$ and $cost = z_{\alpha} z_{\alpha^c}$
			\STATE Generate $z_{\alpha}$ i.i.d samples $\{x_{\alpha}^i\}_{i=1}^{z_{\alpha}}$ from the measure $\mu_{\alpha}$
			\STATE Generate $z_{\alpha^c}=1$ sample $\{x_{\alpha^c}^l\}_{l=1}^{z_{\alpha^c}}$ from the measure $\mu_{\alpha^c}$
			\STATE Evaluate the function $u$ on the grid $\{ (x_{\alpha}^i, x_{\alpha^c}^j)  : 1 \le i \le z_{\alpha}, 1 \le l \le z_{\alpha^c}\}$ and set $\bm{B}^{\alpha} = (u(x_{\alpha}^i, x_{\alpha^c}^l))$
			\STATE Set $r = 0$ and $\mathcal{E}(r) = \infty$
			\WHILE{$\mathcal{E}(r) > \varepsilon_c$ and $z_{\alpha^c} \le n_{\alpha^c}$}
			\STATE Set $z_{\alpha^c} \leftarrow z_{\alpha^c} + 1$ 
			\STATE Sample $\{x_{\alpha^c}^{z_{\alpha^c}}\}$ from the measure $\mu_{\alpha^c}$
			\STATE Update $\bm{B}^{\alpha} = [\bm{B}^{\alpha}, \bm{b}^{\alpha}]$, with $\bm{b}^{\alpha}$ the vector corresponding to the evaluations of $u$ on the grid $\{ (x_{\alpha}^i, x_{\alpha^c}^{z_{\alpha^c}})  : 1 \le l \le z_{\alpha} \}$
			\WHILE{($\mathcal{E}(r) > \varepsilon$ and $r < \min(n_{\alpha},z_{\alpha^c})$)}
			\STATE Set $r \leftarrow r +1$
			\FOR{$l=1, \hdots, z_{\alpha^c}$}
			\STATE Determine the matrix $\bm{V}^{\alpha}_{\setminus l,r}$ of $r$ left singular vectors of $\bm{B}^{\alpha}_{\setminus l}$.
			\ENDFOR
			\STATE Set
			\begin{equation}
			\mathcal{E}(r)^2 = \frac{\sum_{l=1}^{z_{\alpha^c}} \Vert \bm{B}^{\alpha}_l - \bm{V}^{\alpha}_{\setminus l,r}(\bm{V}^{\alpha}_{\setminus l,r})^T \bm{B}^{\alpha}_l \Vert_2^2 }{\sum_{l=1}^{z_{\alpha^c}} \Vert \bm{B}^{\alpha}_l \Vert_2^2 }
			\end{equation}
			\ENDWHILE
			\ENDWHILE
			\STATE Set $r_{\alpha} = \min\{1 \le k \le r : \mathcal{E}(k) \le \varepsilon_c \}$ 
		\end{algorithmic}
	\end{algorithm}
	\section{Proofs}
	\label{appendix:proofs}
	\subsection{Proof of the Lemma \ref{lem:bound_bls_projection}}
	\label{appendix:proof_lemma21}		
	\begin{proof}
		First, let us show that the assumption \cref{eq:s-BLS_qo_projection-random} implies that for all $u \in L^2_{\mu}$, $\mathbb{E}(\Vert u - \mathcal{Q}_{V_{\alpha}}u \Vert^2) \le \left(1 + \gamma \right) \mathbb{E}(\Vert u - \mathcal{P}_{V_{\alpha}}u \Vert^2)$, with $\gamma = p_r(1-\delta)^{-1}(1-\eta^M)^{-1}M $.\\
		The function $u$ has a representation
		$u(x) = \sum_{k=1}^{\text{rank}_{\alpha}(u)}u_k^{\alpha}(x_{\alpha})u_k^{\alpha^c}(x_{\alpha^c})$
		with $\{u_k^{\alpha}\}$ an orthogonal family of functions.\\
		Then
		\begin{equation*}
		\begin{aligned}
		\Vert u - \mathcal{Q}_{V_{\alpha}}u \Vert^2 & = \Vert \sum_{k=1}^{\text{rank}_{\alpha}(u)}u_k^{\alpha} \otimes u_k^{\alpha^c} - \mathcal{Q}_{V_{\alpha}} \left( \sum_{k=1}^{\text{rank}_{\alpha}(u)}u_k^{\alpha}\otimes u_k^{\alpha^c} \right) \Vert^2\\
		& = \sum_{k=1}^{\text{rank}_{\alpha}(u)} \Vert( u_k^{\alpha} - Q_{V_{\alpha}} 
		u_k^{\alpha}) \otimes u_k^{\alpha^c} \Vert^2 \\
		& = \sum_{k=1}^{\text{rank}_{\alpha}(u)} \Vert u_k^{\alpha} - Q_{V_{\alpha}} 
		u_k^{\alpha} \Vert^2_{L^2_{\mu_{\alpha}}} \Vert u_k^{\alpha^c}  \Vert^2_{L^2_{\mu_{\alpha^c}}}. \\
		\end{aligned}
		\end{equation*}
		When $\text{rank}_{\alpha}(u) = \infty$, the series $\sum_{k=1}^{\text{rank}_{\alpha}(u)}u_k^{\alpha}\otimes u_k^{\alpha^c} - \mathcal{Q}_{V_{\alpha}}(u_k^{\alpha}\otimes u_k^{\alpha^c})$ is convergent by definition of $u$.\\
		By hypothesis on projection $Q_{V_{\alpha}}$ we have $\mathbb{E}(\Vert u_k^{\alpha} - Q_{V_{\alpha}} u_k^{\alpha} \Vert^2) \le \left(1 + \gamma \right) \mathbb{E}(\Vert u_k^{\alpha} - P_{V_{\alpha}} u_k^{\alpha} \Vert^2)$. Then 
		\begin{equation*}
		\begin{aligned}
		\mathbb{E}(\Vert u - \mathcal{Q}_{V_{\alpha}}u \Vert^2)  & \le \sum_{k=1}^{\text{rank}_{\alpha}(u)} \left(1 + \gamma \right) \mathbb{E}(\Vert u_k^{\alpha} - P_{V_{\alpha}} 
		u_k^{\alpha} \Vert^2) \Vert u_k^{\alpha^c}  \Vert^2 \\
		& = \sum_{k=1}^{\text{rank}_{\alpha}(u)} \left(1 + \gamma \right) \mathbb{E}(\Vert u_k^{\alpha} \otimes u_k^{\alpha^c} - (P_{V_{\alpha}} 
		u_k^{\alpha}) \otimes u_k^{\alpha^c} \Vert^2)\\
		& = \left(1 + \gamma \right)\mathbb{E}( \Vert u - \mathcal{P}_{V_{\alpha}} u \Vert^2).
		\end{aligned}
		\end{equation*}
		Now, thanks to the Pythagorean equality, we have $
		\mathbb{E}(\Vert u - \mathcal{Q}_{V_{\alpha}} u \Vert^2) = \mathbb{E}(\Vert u - \mathcal{P}_{V_{\alpha}} u \Vert^2) + \mathbb{E}(\Vert \mathcal{Q}_{V_{\alpha}}u - \mathcal{P}_{V_{\alpha}}u \Vert^2 )$, and then 
		\begin{equation*}
		\begin{aligned}
		\mathbb{E}(\Vert u - \mathcal{P}_{V_{\alpha}}u \Vert^2) + \mathbb{E}(\Vert \mathcal{Q}_{V_{\alpha}}u - \mathcal{P}_{V_{\alpha}}u \Vert^2)  & \le \left(1 + \gamma \right)\mathbb{E}( \Vert u - \mathcal{P}_{V_{\alpha}}u \Vert^2), \text{which implies } \\ \mathbb{E}(\Vert \mathcal{Q}_{V_{\alpha}}u - \mathcal{P}_{V_{\alpha}}u \Vert^2)  & \le \gamma \mathbb{E}(\Vert u - \mathcal{P}_{V_{\alpha}}u \Vert^2).
		\end{aligned}
		\end{equation*}
		Using the triangular inequality 
		$ \Vert \mathcal{Q}_{V_{\alpha}} u \Vert^2 \le 2\Vert \mathcal{Q}_{V_{\alpha}}u - \mathcal{P}_{V_{\alpha}}u \Vert^2 +  2\Vert \mathcal{P}_{V_{\alpha}}u \Vert^2$,
		we get
		\begin{equation*}
		\mathbb{E}(\Vert \mathcal{Q}_{V_{\alpha}} u \Vert^2) \le  2 \gamma \mathbb{E}(\Vert u - \mathcal{P}_{V_{\alpha}}u \Vert^2) + 2\mathbb{E}(\Vert \mathcal{P}_{V_{\alpha}} u \Vert^2) \le 2(\gamma +1)\Vert u \Vert^2,
		\end{equation*}
		which ends the proof.
	\end{proof}
	\subsection{Proof of the Theorem \ref{th:qo_constant_bls}}
	\begin{proof}
		By definition, for all $v \in L^2_{\mu}$ with $\text{rank}_{\alpha}(v) \le r_{\alpha}$, 
		\begin{equation*}
		\Vert \mathcal{Q}_{V_{\alpha}}u - \mathcal{P}_{U_{\alpha}^{\star}}\mathcal{Q}_{V_{\alpha}}u \Vert  =  \min_{\text{rank}_{\alpha}(v) \le r_{\alpha}} \Vert \mathcal{Q}_{V_{\alpha}}u - v\Vert.
		\end{equation*}
		If we choose in particular $v = \mathcal{Q}_{V_{\alpha}}\mathcal{P}_{U_{\alpha}}u$, where $U_{\alpha}$ is the $\alpha$-principal subspace of $u$, defined in Equation \Cref{eq:sol_principal_subs}, it comes
		\begin{equation*}
		\Vert \mathcal{Q}_{V_{\alpha}}u - \mathcal{P}_{U_{\alpha}^{\star}}\mathcal{Q}_{V_{\alpha}}u \Vert \le \Vert \mathcal{Q}_{V_{\alpha}}u - \mathcal{Q}_{V_{\alpha}}\mathcal{P}_{U_{\alpha}}u \Vert = \Vert \mathcal{Q}_{V_{\alpha}} (u - \mathcal{P}_{U_{\alpha}}u)\Vert.
		\end{equation*} 
		Taking the expectation and using Lemma \ref{lem:bound_bls_projection} it comes,
		\begin{equation*}
		\mathbb{E}(\Vert \mathcal{Q}_{V_{\alpha}}u - \mathcal{P}_{U_{\alpha}^{\star}}\mathcal{Q}_{V_{\alpha}}u \Vert^2) \le \mathbb{E}(\Vert \mathcal{Q}_{V_{\alpha}} (u - \mathcal{P}_{U_{\alpha}}u)\Vert^2) \le  C_1 e_{r_{\alpha}}^{\alpha}(u)^2.
		\end{equation*}
	\end{proof}
	\subsection{Proof of the Theorem \ref{th:final_err_bound}}
	\label{appendix:proof_theorem22}
	We start a preliminary results given by Lemma \ref{lem:ineq_proj_sons}, which is necessary for the proof of Lemma \ref{lem:bound_final_error_qo_proj}. Then, the Theorem \ref{th:final_err_bound} is deduced by making further assumptions on the reconstruction error of the empirical $\alpha$-principal subspace $\hat{U}_{\alpha}^{\star}$. 
	\begin{lemma}
		\label{lem:ineq_proj_sons}
		For $\alpha$ an interior node of a tree $T$ such that $V_{\alpha} = \bigotimes_{\beta \in S(\alpha)} \widehat{U}_{\beta}^{\star}$  we have
		\begin{equation*}
		\Vert u - \mathcal{P}_{V_{\alpha}}u \Vert^2 \le \sum_{\beta \in S(\alpha)} \Vert u - \mathcal{P}_{\widehat{U}_{\beta}^{\star}}u \Vert^2
		\end{equation*}
	\end{lemma}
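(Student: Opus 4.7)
}
The plan is to exploit the fact that since the children $S(\alpha)=\{\beta_1,\ldots,\beta_s\}$ form a partition of $\alpha$, the subspace-wise projections $P_i:=\mathcal{P}_{\widehat{U}_{\beta_i}^{\star}}$ act on disjoint groups of variables and therefore pairwise commute. Indeed, each $P_i$ may be written as a tensor product $P_i = P_i^{(\beta_i)}\otimes I^{(\beta_i^c)}$, where $P_i^{(\beta_i)}$ is the orthogonal projection in $L^2_{\mu_{\beta_i}}$ onto $\widehat{U}_{\beta_i}^{\star}$. For $i\neq j$, $\beta_i\cap\beta_j=\emptyset$, so $P_i$ and $P_j$ act on different tensor factors and commute. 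Moreover, composing all of them yields
$$
P_1P_2\cdots P_s = \Bigl(\bigotimes_{\beta\in S(\alpha)} P_\beta^{(\beta)}\Bigr)\otimes I^{(\alpha^c)} = \mathcal{P}_{V_\alpha}.
$$

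Next, I would telescope the identity
$$
I-\mathcal{P}_{V_\alpha} = I - P_1\cdots P_s = \sum_{i=1}^{s} P_1\cdots P_{i-1}\, Q_i,
\qquad Q_i:=I-P_i,
$$
with the convention that the empty product is the identity, so that
$$
u - \mathcal{P}_{V_\alpha} u = \sum_{i=1}^{s} P_1\cdots P_{i-1}\, Q_i\, u.
$$

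The decisive step is to check that these $s$ summands are mutually orthogonal in $L^2_\mu$. For the $i$-th term, the $\beta_i$-marginal lies in the orthogonal complement of $\widehat{U}_{\beta_i}^{\star}$ (because of the trailing $Q_i$), whereas for any $i'>i$ the $i'$-th term has $\beta_i$-marginal in $\widehat{U}_{\beta_i}^{\star}$ (because $P_i$ appears in the leading product $P_1\cdots P_{i'-1}$). Using the tensor-product structure of $P_i$ and the orthogonality of $\widehat{U}_{\beta_i}^{\star}$ and $(\widehat{U}_{\beta_i}^{\star})^{\perp}$ in $L^2_{\mu_{\beta_i}}$, the inner product of the $i$-th and $i'$-th summands vanishes. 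The Pythagorean identity then gives
$$
\Vert u - \mathcal{P}_{V_\alpha} u\Vert^2 = \sum_{i=1}^{s} \Vert P_1\cdots P_{i-1}\, Q_i\, u\Vert^2.
$$

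Finally, each $P_j$ is an orthogonal projection of operator norm at most one, so $\Vert P_1\cdots P_{i-1}\, Q_i\, u\Vert \le \Vert Q_i\, u\Vert = \Vert u - \mathcal{P}_{\widehat{U}_{\beta_i}^{\star}} u\Vert$, and summing over $i$ yields the claimed inequality. The main obstacle is the orthogonality verification in the telescoping sum; once commutation of the $P_i$ and the factored form $P_i = P_i^{(\beta_i)}\otimes I^{(\beta_i^c)}$ are in hand, it reduces to a direct computation on tensor factors.
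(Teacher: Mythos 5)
Your proof is correct and is essentially the paper's argument: the paper peels off one son $\gamma$ at a time, applies the Pythagorean identity to $(u-\mathcal{P}_{\widehat{U}_{\gamma}^{\star}}u)+\mathcal{P}_{\widehat{U}_{\gamma}^{\star}}(u-\prod_{\beta\neq\gamma}\mathcal{P}_{\widehat{U}_{\beta}^{\star}}u)$, bounds the second term by contractivity of the projection, and recurses — which is exactly your telescoping decomposition unrolled. The ingredients (commuting tensor-factor projections whose product is $\mathcal{P}_{V_\alpha}$, orthogonality of the ranges of $P_i$ and $I-P_i$, and operator norm at most one) are the same in both write-ups.
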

	\begin{proof}
		Let $\gamma$ be an element of $S(\alpha)$, we have:
		\begin{equation*}
		\begin{aligned}
		\Vert u - \mathcal{P}_{V_{\alpha}}u \Vert^2 & = \Vert u - \prod_{\beta \in S(\alpha)} \mathcal{P}_{\widehat{U}_{\beta}^{\star}}u \Vert^2 \\
		& = \Vert u - \mathcal{P}_{\widehat{U}_{\gamma}^{\star}}u \Vert^2  + \Vert \mathcal{P}_{\widehat{U}_{\gamma}^{\star} }u - \mathcal{P}_{\widehat{U}_{\gamma}^{\star}} \prod_{\beta \in S(\alpha) \setminus \gamma} \mathcal{P}_{\widehat{U}_{\beta}^{\star}}u \Vert^2\\
		& \le  \Vert u - \mathcal{P}_{\widehat{U}_{\gamma}^{\star}}u \Vert^2  + \Vert 
		u - \prod_{\beta \in S(\alpha) \setminus \gamma} \mathcal{P}_{\widehat{U}_{\beta}^{\star}}u \Vert^2.
		\end{aligned}
		\end{equation*}
		Proceeding recursively, we obtain the desired result.
	\end{proof}
	\textbf{Proof of Lemma \ref{lem:bound_final_error_qo_proj}}.
	\begin{proof}
		The final approximation $u^{\star}$ is defined by $u^{\star} = \mathcal{Q}_{V_D}u$.\\
		For each $\alpha \in T$, thanks to the properties of $Q_{V_{\alpha}}$ we have from Lemma \ref{lem:bound_final_error_qo_proj}
		\begin{equation*}
		\mathbb{E}(\Vert u - \mathcal{Q}_{V_{\alpha}}
		u \Vert^2) \le C_1  \mathbb{E}(\Vert u - \mathcal{P}_{V_{\alpha}}u \Vert^2)
		\end{equation*}
		where $C_1$ is the constant associated to the boosted least-squares projection.\\
		If $\alpha \in \mathcal{L}(T)$, then $V_{\alpha}$ is a given deterministic space and $\mathbb{E}(\Vert u - \mathcal{P}_{V_{\alpha}} \Vert^2) =\Vert u - \mathcal{P}_{V_{\alpha}} \Vert^2$.\\
		If $\alpha \notin \mathcal{L}(T)$, then $V_{\alpha} = \bigotimes_{\beta \in S(\alpha) }\widehat{U}_{\beta}^{\star}$ and from Lemma \ref{lem:ineq_proj_sons},
		\begin{equation*}
		\label{eq:ineq_product_spaces_emp}
		\mathbb{E}(\Vert u - \mathcal{P}_{V_{\alpha}}u \Vert^2) \le \sum_{\beta \in S(\alpha)} \mathbb{E}(\Vert u - \mathcal{P}_{\widehat{U}_{\beta}^{\star}}u \Vert^2).
		\end{equation*}
		Using the triangular inequality, we can write
		\begin{equation*}\label{eq:ineq_emp_pca_spaces}
		\begin{aligned}
		\Vert u - \mathcal{P}_{\widehat{U}_{\beta}^{\star}}u \Vert & = \Vert u - \mathcal{Q}_{V_{\beta}}u + \mathcal{Q}_{V_{\beta}}u - \mathcal{P}_{\widehat{U}_{\beta}^{\star}}\mathcal{Q}_{V_{\beta}}u + \mathcal{P}_{\widehat{U}_{\beta}^{\star}}\mathcal{Q}_{V_{\beta}}u - \mathcal{P}_{\widehat{U}_{\beta}^{\star}}u \Vert\\
		& \le \Vert u - \mathcal{Q}_{V_{\beta}}u + \mathcal{P}_{\widehat{U}_{\beta}^{\star}}\mathcal{Q}_{V_{\beta}}u - \mathcal{P}_{\widehat{U}_{\beta}^{\star}}u \Vert + \Vert \mathcal{Q}_{V_{\beta}}u - \mathcal{P}_{\widehat{U}_{\beta}^{\star}}\mathcal{Q}_{V_{\beta}}u \Vert \\
		& \le \Vert (id - \mathcal{P}_{\widehat{U}_{\beta}^{\star}})(u - \mathcal{Q}_{V_{\beta}}u) \Vert + \Vert \mathcal{Q}_{V_{\beta}}u - \mathcal{P}_{\widehat{U}_{\beta}^{\star}}\mathcal{Q}_{V_{\beta}}u \Vert 
		\end{aligned}
		\end{equation*}
		so that,
		\begin{equation*}
		\Vert u - \mathcal{P}_{\widehat{U}_{\beta}^{\star}}u \Vert^2  \le 2\Vert u - \mathcal{Q}_{V_{\beta}}u\Vert^2 + 2\Vert \mathcal{Q}_{V_{\beta}}u - \mathcal{P}_{\widehat{U}_{\beta}^{\star}}\mathcal{Q}_{V_{\beta}}u \Vert^2
		\end{equation*}
		Using the equation (\ref{eq:ineq_product_spaces_emp}) and taking the expectation, it comes
		\begin{equation*}
		\mathbb{E}(\Vert u - \mathcal{P}_{V_{\alpha}}u \Vert^2) \le \sum_{\beta \in S(\alpha)} 2\mathbb{E}(\Vert u - \mathcal{Q}_{V_{\beta}}u\Vert^2) + 2\mathbb{E}(\Vert \mathcal{Q}_{V_{\beta}}u - \mathcal{P}_{\widehat{U}_{\beta}^{\star}}\mathcal{Q}_{V_{\beta}}u \Vert^2)
		\end{equation*}
		The term $\mathbb{E}(\Vert \mathcal{Q}_{V_{\beta}}u - \mathcal{P}_{\widehat{U}_{\beta}^{\star}}\mathcal{Q}_{V_{\beta}}u \Vert^2)$ is the error due to the principal component analysis. To deal with the term $\mathbb{E}(\Vert u - \mathcal{Q}_{V_{\beta}}u\Vert^2)$, we distinguish the case where $\beta$ is a leaf or not. If $\beta$ is not a leaf, we proceed recursively using \Cref{lem:ineq_proj_sons} and the triangular inequality. Going through all nodes, we obtain
		\begin{equation*}
		\label{eq:bound_final_error}
		\mathbb{E}(\Vert u - u^{\star} \Vert^2) \le \sum_{\alpha \in T \setminus D} (2C_1)^{l(\alpha)}\mathbb{E}(\Vert \mathcal{Q}_{V_{\alpha}}u - \mathcal{P}_{\widehat{U}_{\alpha}^{\star}}\mathcal{Q}_{V_{\alpha}}u \Vert^2) + \sum_{\alpha \in \mathcal{L}(T)} \frac{1}{2} (2C_1)^{l(\alpha)+1} \Vert u - \mathcal{P}_{V_{\alpha}}u \Vert^2.
		\end{equation*}
	\end{proof}
	The Theorem \ref{th:final_err_bound} is deduced by making further assumptions on the reconstruction error of the empirical $\alpha$-principal subspace $\hat{U}_{\alpha}^{\star}$. More precisely, we assume that we have for all $\alpha \in T \setminus \{D\}$
	\begin{equation}
	\label{eq:hyp_ineq_approx_empirical_subspaces_appendix}
	\mathbb{E}(\Vert \mathcal{Q}_{V_{\alpha}}u - \mathcal{P}_{\widehat{U}_{\alpha}^{\star}}\mathcal{Q}_{V_{\alpha}}u \Vert^2 \vert \mathcal{Q}_{V_{\alpha}}u) \le C_2 \mathbb{E}(\Vert \mathcal{Q}_{V_{\alpha}}u - \mathcal{P}_{U_{\alpha}^{\star}}\mathcal{Q}_{V_{\alpha}}u \Vert^2 \vert \mathcal{Q}_{V_{\alpha}}u),
	\end{equation}
	where $\Vert \mathcal{Q}_{V_{\alpha}}u - \mathcal{P}_{U_{\alpha}^{\star}}\mathcal{Q}_{V_{\alpha}}u \Vert^2$ is the reconstruction error associated with the $\alpha$-principal subspace of $U_{\alpha}^{\star}$ solution of Equation \Cref{eq:sol_approx_principal_subs}.
	\begin{proof}
		Taking the expectation in \Cref{eq:hyp_ineq_approx_empirical_subspaces_appendix}, we have for all $\alpha \in T \setminus \{D\}$.
		\begin{equation*}
		\label{eq:ineq_empirical_bound_int_nodes}
		\mathbb{E}(\mathbb{E}(\Vert \mathcal{Q}_{V_{\alpha}}u - \mathcal{P}_{\widehat{U}_{\alpha}^{\star}}\mathcal{Q}_{V_{\alpha}}u \Vert^2 \vert \mathcal{Q}_{V_{\alpha}}u)) \le C_2\mathbb{E}( \mathbb{E}(\Vert \mathcal{Q}_{V_{\alpha}}u - \mathcal{P}_{U_{\alpha}^{\star}}\mathcal{Q}_{V_{\alpha}}u \Vert^2 \vert \mathcal{Q}_{V_{\alpha}}u)),
		\end{equation*}
		which yields
		\begin{equation*}
		\mathbb{E}(\Vert \mathcal{Q}_{V_{\alpha}}u - \mathcal{P}_{\widehat{U}_{\alpha}^{\star}}\mathcal{Q}_{V_{\alpha}}u \Vert^2) \le C_2 \mathbb{E}(\Vert \mathcal{Q}_{V_{\alpha}}u - \mathcal{P}_{U_{\alpha}^{\star}}\mathcal{Q}_{V_{\alpha}}u \Vert^2).
		\end{equation*}
		Besides, the term $\mathbb{E}(\Vert \mathcal{Q}_{V_{\alpha}}u - \mathcal{P}_{U_{\alpha}^{\star}}\mathcal{Q}_{V_{\alpha}}u \Vert^2)$ can be bounded thanks to  Theorem \ref{th:qo_constant_bls}, such that
		\begin{equation*}
		\mathbb{E}(\Vert \mathcal{Q}_{V_{\alpha}}u - \mathcal{P}_{\widehat{U}_{\alpha}^{\star}}\mathcal{Q}_{V_{\alpha}}u \Vert^2) \le C_2 C_1 e_{r_{\alpha}}^{\alpha}(u)^2.
		\end{equation*}
		Using this bound and theorem \ref{lem:bound_final_error_qo_proj}, it comes
		\begin{equation*}
		\mathbb{E}(\Vert u - u^{\star} \Vert^2) \le C_1 C_2 \sum_{\alpha \in T \setminus D} (2C_1)^{l(\alpha)} e_{r_{\alpha}}^{\alpha}(u)^2 + \sum_{\alpha \in \mathcal{L}(T)} \frac{1}{2} (2C_1)^{l(\alpha)+1} e_{m_{\alpha}}^{\alpha,dis}(u)^2,
		\end{equation*}
		which ends the proof.
	\end{proof}

\end{appendix}

\end{document}